\newtheorem{thm}{Theorem}
\newtheorem{cor}{Corollary}
\newtheorem{lemma}{Lemma}
\theoremstyle{definition}
\newtheorem{ex}{Example}
\def\ind{\mathbh{1}}
\begin{document}
\begin{frontmatter}

\title{Randomly stopped maximum and maximum of sums with consistently varying distributions}

\author{\inits{I.M.}\fnm{Ieva Marija}\snm{Andrulyt\.{e}}}\email{i.m.andrulyte@gmail.com}
\author{\inits{M.}\fnm{Martynas}\snm{Manstavi\v{c}ius}}
\email{martynas.manstavicius@mif.vu.lt}
\author{\inits{J.}\fnm{Jonas}\snm{\v Siaulys}\corref{cor1}}\email{jonas.siaulys@mif.vu.lt}
\cortext[cor1]{Corresponding author.}

\address{Faculty of Mathematics and Informatics, Vilnius University, Naugarduko 24, Vilnius~LT-03225, Lithuania}

\markboth{I.M. Andrulyt\.{e} et al.}{Randomly stopped maximum and maximum of sums with consistently varying distributions}

\begin{abstract}
Let $\{\xi_1,\xi_2,\ldots\}$ be a sequence of independent random
variables, and~$\eta$ be a counting random variable independent of this
sequence. In addition, let $S_0:=0$ and $S_n:=\xi_1+\xi_2+\cdots+\xi_n$
for $n\geqslant1$. We consider conditions for random variables
$\{\xi_1,\xi_2,\ldots\}$ and $\eta$ under which the distribution
functions of the random maximum
$\xi_{(\eta)}:=\max\{0,\xi_1,\xi_2,\ldots,\xi_\eta\} $ and of the
random maximum of sums $S_{(\eta)}:=\max\{S_0,S_1,S_2,\ldots,S_\eta\}$
belong to the class of consistently varying distributions. In our
consideration the random variables $\{\xi_1,\xi_2,\ldots\}$ are not
necessarily identically distributed.
\end{abstract}

\begin{keywords}
\kwd{Heavy tail}
\kwd{consistently varying tail}
\kwd{randomly stopped maximum}
\kwd{randomly stopped maximum of sums}
\kwd{closure property}
\end{keywords}
\begin{keywords}[2010]
\kwd{62E20}
\kwd{60E05}
\kwd{60F10}
\kwd{44A35}
\end{keywords}

\received{10 December 2016}
%
\revised{23 January 2017}
%
\accepted{27 January 2017}
\publishedonline{6 March 2017}
\end{frontmatter}

\section{Introduction}\label{i}

Let $\{\xi_1,\xi_2,\ldots\}$ be a sequence of independent random
variables (r.v.s) with distribution functions (d.f.s) $\{F_{\xi
_1},F_{\xi_2},\ldots\}$, and let $\eta$ be a counting r.v., that~is,
an~integer-valued, nonnegative, and nondegenerate at zero r.v. In
addition, suppose that the r.v. $\eta$ and r.v.s $\{\xi_1,\xi_2,\ldots
\}$ are independent.

Let
$S_0=0$, $S_n=\xi_1+\xi_2+\cdots+\xi_n$ for $n\in\mathbb{N}$, and let
\begin{equation*}
S_\eta=\sum\limits
_{k=1}^{\eta}
\xi_k
\end{equation*}
be the randomly stopped sum of r.v.s $\{\xi_1,\xi_2,\ldots\}$.

Similarly, let
$\xi_{(n)}=\max\{0,\xi_1,\xi_2,\ldots,\xi_n\}$ for $n\in\mathbb{N}$,
$\xi_{(0)}=0$, and let
\begin{equation*}
\xi_{(\eta)}= %
\begin{cases}
\max\{0,\xi_1,\xi_2,\ldots,\xi_\eta\}& \textnormal{if}\ \eta\geqslant1,\\
0 &\textnormal{if}\  \eta=0
\end{cases} %
\end{equation*}
be the randomly stopped maximum of r.v.s $\{\xi_1,\xi_2,\ldots\}$.

Finally, let $S_{(n)}:=\max\{S_0,S_1,S_2,\ldots,S_n\}$ for $n\geqslant
0$, and let
\[
S_{(\eta)}:=\max\{S_0,S_1,S_2,
\ldots,S_\eta\}
\]
be the randomly stopped maximum of sums $S_0,S_1,S_2,\ldots$.

We denote the distribution functions (d.f.s) of $S_\eta$, $\xi_{(\eta
)}$, and $S_{(\eta)}$ by $F_{S_\eta}$, $F_{\xi_{(\eta)}}$, and
$F_{S_{(\eta)}}$ respectively, together with their tails $\overline
{F}_{S_\eta}$, $\overline{F}_{\xi_{(\eta)}}$, and $\overline
{F}_{S_{(\eta)}}$. For any positive $x$, we have
\begin{align*}
\overline{F}_{S_\eta}(x)&=\sum\limits_{n=1}^{\infty}\mathbb{P}(\eta =n)\mathbb{P}(S_n> x),\\
\overline{F}_{\xi_{(\eta)}}(x)&=\sum\limits_{n=1}^{\infty}\mathbb{P}(\eta=n)\mathbb{P}(\xi_{(n)}> x),\\
\overline{F}_{S_{(\eta)}}(x)&=\sum\limits_{n=1}^{\infty}\mathbb{P}(\eta =n)\mathbb{P}(S_{(n)}> x).
\end{align*}

In \cite{kss-2016}, conditions were found for the d.f.s $F_{S_\eta}$ to
belong to the class of consistently varying distributions. In this
paper, we are interested in sufficient conditions under which the d.f.s
$F_{\xi_{(\eta)}}$ and $F_{S_{(\eta)}}$ have consistently varying tails.

Throughout this paper, for two vanishing (at infinity) functions~$f$
and~$g$,\break $f(x) \mathop{=} o  ( g(x)  )$ means that $\lim_{x\to\infty} {f(x)}/{g(x)}=0$, and $f(x) \sim g(x)$
means that\break $\lim_{x\to\infty} {f(x)}/{g(x)}=1$. Also, we denote the support of a
counting r.v.\ $\eta$ by
\[
{\rm supp}(\eta):=\bigl\{n\in\mathbb{Z}_+:\mathbb{P}(\eta=n)>0\bigr\}.
\]

Before discussing the properties of $F_{\xi_{(\eta)}}$ and $F_{S_{(\eta
)}}$, we recall the definitions of some classes of heavy-tailed d.f.s.
For a~d.f. $F$, we denote $ \overline{F}(x) = 1-F(x)$ for real~$x$.

\begin{itemize}
\item \textit{A d.f. $F$ is heavy-tailed $(F \in
\mathcal{H })$
if for every $\delta> 0$,
\[
\lim\limits
_{x\rightarrow\infty} \overline{F}(x){\rm e}^{\delta x} = \infty.\vadjust{\eject}
\]
}
\item \textit{A d.f. $F$ is long-tailed $(F\in\mathcal{L})$
if for every $y$ $($equivalently, for some $y>0)$, ${\overline{F}(x+y)}\mathop{\sim}{\overline{F}(x)}$.}

\item \textit{A d.f. $F$ has dominatingly varying tail $(F\in\mathcal{D})$ if for every
$y\in(0,1)$  $($equi\-va\-lent\-ly, for some $y\in(0,1))$,
\begin{equation*}
\limsup_{x\rightarrow\infty}\frac{\overline{F}(\mathit{xy})}{\overline
{F}(x)}<\infty.
\end{equation*}
}
\item \textit{A d.f. $F$ has consistently varying tail $(F\in\mathcal{C})$ if
\begin{equation*}
\lim_{y\uparrow1}\limsup_{x\to\infty} \frac{\overline{F}(\mathit{xy})}{\overline
{F}(x)} =
1.
\end{equation*}
}
\item \textit{A d.f. $F$ has regularly varying tail $(F\in
\mathcal{R})$ if there is $\alpha\geqslant0$ such that
\begin{equation*}
\lim\limits
_{x\rightarrow\infty}\frac{\overline{F}(\mathit{xy})}{\overline
{F}(x)} = y^{-\alpha},
\end{equation*}
for all $y > 0$. }
\item \textit{A d.f. $F$ supported on the interval $[0,\infty)$ is subexponential $(F\in\mathcal{S})$ if
\begin{equation}
\label{tr}\lim\limits
_{x\rightarrow\infty}\frac{
\overline{F*F}(x)}{\overline{F}(x)} = 2.
\end{equation}
}%
\textit{If a d.f. $G$ is supported
on $\mathbb{R}$, then we say that $G$ is subexponential $(G\in\mathcal{S})$
if the d.f. $F(x)=G(x)\ind_{[0,\infty)}(x)$ satisfies relation \eqref{tr}.}
\end{itemize}

It is known (see, e.g., \cite{chist,eo-1984,k-1988}, and Chapters 1.4
and A3 in \cite{EKM}) that these classes satisfy the following inclusions:
\[
\mathcal{R} \subset\mathcal{C} \subset\mathcal{L} \cap\mathcal{D} \subset
\mathcal{S} \subset\mathcal{L} \subset\mathcal{H}, \quad \mathcal {D}\subset
\mathcal{H}.
\]
These inclusions are depicted in Fig.~\ref{f1} borrowed from the paper
\cite{kss-2016}. In this figure, the class $\mathcal{C}$ of
distributions having consistently varying tails is highlighted.
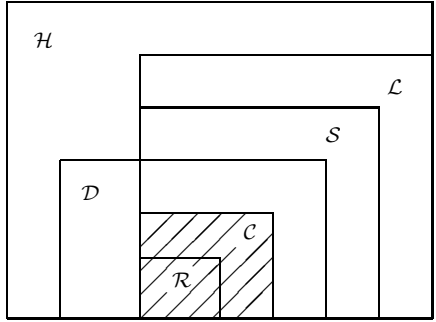
\begin{figure}[h]
\centering
\begin{picture}(150,120)
\put(40,0){\line(1,0){110}}
\put(40,0){\line(0,1){100}}
\put(150,0){\line(0,1){100}}
\put(40,100){\line(1,0){110}}
\put(-10,0){\line(1,0){50}}
\put(-10,0){\line(0,1){120}}
\put(-10,120){\line(1,0){160}}
\put(150,100){\line(0,1){20}}
\put(40,80){\line(1,0){90}}
\put(130,0){\line(0,1){80}}
\put(10,0){\line(0,1){60}}
\put(10,60){\line(1,0){100}}
\put(110,0){\line(0,1){60}}
\put(40,40){\line(1,0){50}}
\put(90,0){\line(0,1){40}}
\put(40,23){\line(1,0){30}}
\put(70,0){\line(0,1){23}}
\put(0,103){$\mathcal{H}$}
\put(133,85){$\mathcal{L}$}
\put(110,67){$\mathcal{S}$}
\put(18,45){$\mathcal{D}$}
\put(79,30){$\mathcal{C}$}
\put(52,12){$\mathcal{R}$}
\put(60,20){\line(1,1){20}}
\put(40,0){\line(1,1){10}}
\put(40,9){\line(1,1){31}}
\put(40,18){\line(1,1){22}}
\put(40,27){\line(1,1){13}}
\put(49,0){\line(1,1){27}}
\put(58,0){\line(1,1){32}}
\put(67,0){\line(1,1){23}}
\put(76,0){\line(1,1){14}}
\end{picture}
\caption{Classes of heavy-tailed distributions}
\label{f1}
\end{figure}

It should be noted that the subject of the paper is partially motivated
by the closure problem of the random convolution. In the case of
independent and identically distributed (i.i.d.) r.v.s $\{\xi,\xi_1,\xi
_2,\ldots\}$, we say that a class $\mathcal{K}$ of d.f.s is closed
with respect to the random convolution if the condition $F_\xi\in
\mathcal{K}$ implies $F_{S_\eta}\in\mathcal{K}$. The first result on
the convolution closure of subexponential distributions was obtained by
Embrechts and Goldie (see Thm.~4.2 in \cite{em+gol}) and by Cline (see
Thm.~2.13 in \cite{cline}).
\begin{thm}\label{tt1}
\textit{Let $\{\xi_1,\xi_2,\ldots\}$ be independent copies of a
nonnegative r.v. $\xi$ with subexponential d.f. $F_\xi$. Let $\eta$ be
a counting r.v. independent of $\{\xi_1,\xi_2,\ldots\}$. If $\mathbb
{E}(1+\delta)^\eta<\infty$
for some $\delta>0$, then the d.f. $F_{{S}_{\eta}}\in\mathcal{S}$.}
\end{thm}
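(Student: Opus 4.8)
The plan is to show that $F_{S_\eta}$ is tail-equivalent to $F_\xi$ and then to invoke the closure of the class $\mathcal{S}$ under tail-equivalence. Write $F:=F_\xi$ for brevity. Since the $\xi_k$ are i.i.d.\ with d.f.\ $F$, each partial sum $S_n$ has d.f.\ $F^{*n}$, so that
\[
\overline{F}_{S_\eta}(x)=\sum_{n=1}^\infty\mathbb{P}(\eta=n)\,\overline{F^{*n}}(x),\qquad x>0.
\]
My aim is to prove $\overline{F}_{S_\eta}(x)\sim(\mathbb{E}\eta)\,\overline{F}(x)$ as $x\to\infty$; here $\mathbb{E}\eta\in(0,\infty)$ because $\eta$ is nondegenerate at zero, while $\mathbb{E}(1+\delta)^\eta<\infty$ forces every polynomial moment of $\eta$ to be finite.

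The two facts about a subexponential $F$ that I would use are standard. First, for each fixed $n\geqslant1$ one has $\overline{F^{*n}}(x)\sim n\,\overline{F}(x)$ as $x\to\infty$; this follows by induction on $n$ from $F\in\mathcal{S}\subset\mathcal{L}$ (see Chapter~A3 of \cite{EKM}). Second, Kesten's bound: for every $\varepsilon>0$ there is a finite constant $K=K(\varepsilon)$ with
\[
\overline{F^{*n}}(x)\leqslant K\,(1+\varepsilon)^n\,\overline{F}(x)\qquad\text{for all }n\geqslant1,\ x\geqslant0.
\]

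With these in hand, fix $\varepsilon\in(0,\delta]$; since $\mathbb{E}(1+\delta)^\eta<\infty$, the sequence $n\mapsto K\,(1+\varepsilon)^n\,\mathbb{P}(\eta=n)$ is summable. Writing
\[
\frac{\overline{F}_{S_\eta}(x)}{\overline{F}(x)}=\sum_{n=1}^\infty\mathbb{P}(\eta=n)\,\frac{\overline{F^{*n}}(x)}{\overline{F}(x)},
\]
Kesten's bound dominates the $n$th summand by the summable sequence above, uniformly in $x$, while the pointwise convolution asymptotics give $\mathbb{P}(\eta=n)\,\overline{F^{*n}}(x)/\overline{F}(x)\to n\,\mathbb{P}(\eta=n)$. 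The dominated convergence theorem for series then yields $\overline{F}_{S_\eta}(x)/\overline{F}(x)\to\sum_{n\geqslant1}n\,\mathbb{P}(\eta=n)=\mathbb{E}\eta$, that is, $\overline{F}_{S_\eta}(x)\sim(\mathbb{E}\eta)\,\overline{F}(x)$.

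Finally, since $F\in\mathcal{S}$ and $\overline{F}_{S_\eta}\sim c\,\overline{F}$ with $c=\mathbb{E}\eta\in(0,\infty)$, the d.f.\ $F_{S_\eta}$ is tail-equivalent to $F$, and the closure of $\mathcal{S}$ under tail-equivalence (a standard property; see again Chapter~A3 of \cite{EKM}) gives $F_{S_\eta}\in\mathcal{S}$. The main obstacle is the interchange of limit and infinite sum in the penultimate step: the pointwise asymptotics hold only for \emph{fixed} $n$, so some uniform-in-$n$ control is indispensable, and Kesten's bound together with the exponential-moment hypothesis $\mathbb{E}(1+\delta)^\eta<\infty$ is exactly what supplies a summable dominating sequence. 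Without such a moment condition the series could fail to converge to $\mathbb{E}\eta$, which explains why the hypothesis appears in the statement.
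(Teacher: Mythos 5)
Your proof is correct, but note that the paper itself never proves Theorem~\ref{tt1}: it is recalled there as a known result, attributed to Embrechts and Goldie (Thm.~4.2 in \cite{em+gol}) and to Cline (Thm.~2.13 in \cite{cline}), so there is no in-paper argument to compare against. What you have written is the standard compound-distribution proof found in the literature (essentially as in Chapter~1.3/A3 of \cite{EKM}): Kesten's uniform bound $\overline{F^{*n}}(x)\leqslant K(1+\varepsilon)^n\overline{F}(x)$ with $\varepsilon\leqslant\delta$ supplies a summable dominating sequence, dominated convergence together with $\overline{F^{*n}}(x)\sim n\overline{F}(x)$ for fixed $n$ yields $\overline{F}_{S_\eta}(x)\sim(\mathbb{E}\eta)\overline{F}(x)$ with $\mathbb{E}\eta\in(0,\infty)$ (positivity from nondegeneracy at zero, finiteness from the exponential moment), and the closure of $\mathcal{S}$ under tail equivalence finishes the argument. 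All the ingredients you invoke are genuine theorems for $F\in\mathcal{S}$, and their use here is legitimate; the only caveat worth stating is that Kesten's bound is itself the deep step, so your proof is only as self-contained as that lemma, which is exactly the trade-off the original sources make as well. You also correctly identify why the hypothesis $\mathbb{E}(1+\delta)^\eta<\infty$ cannot be dropped from this argument: it is what makes the dominating sequence summable.
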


The random closure results for class $\mathcal{D}$ can be found in \cite
{ds-2016,ls-2012}, and for the class $\mathcal{L}$, in \cite
{albin,ls-2012,wat,xu}. The random closure
results for the class $\mathcal{C}$ can be derived from the results of
\cite{kss-2016}. We note that in \cite{ds-2016,kss-2016,xu}, the case of not necessarily identically\vadjust{\eject} distributed r.v.s $\{
\xi_1,\xi_2,\ldots\}$ was considered. We further recall two known
results. First, in Theorem~\ref{tt2}, we give conditions for~$F_{S_\eta
}$ to belong to the class $\mathcal{D}$. Its proof can be found in \cite
[Thm.~2.1]{ds-2016}. Recall that a d.f. $F$ belongs to the class
$\mathcal{D}$ if and only if its upper Matuszewska index $J^{+}_{F} <
\infty$, where by definition
\begin{equation*}
J_F^+=-\lim\limits
_{y\rightarrow\infty}\frac{1}{\log y}\log \biggl(\liminf \limits
_{x\rightarrow\infty}
\frac{\overline{F}(xy)}{\overline
{F}(x)} \biggr).
\end{equation*}

\begin{thm}\label{tt2}
Let r.v.s $\{\xi_1,\xi_2,\ldots\}$ be nonnegative independent
but not necessary identically distributed, and $\eta$ be a counting
r.v. independent of $\{\xi_1,\xi_2,\ldots\}$. Then the d.f.\  $F_{S_\eta
}$ belongs to the class $\mathcal{D}$ if the following three conditions
are satisfied:
\begin{eqnarray*}
&{\rm(i)}& F_{\xi_\kappa}\in\mathcal{D}\ \ \mathit{for\ some}\ \kappa\in {\rm supp}(\eta),\\
&{\rm(ii)}& \limsup\limits_{x\rightarrow\infty} \sup\limits_{n\geqslant\kappa}\frac{1}{n\overline{F}_{\xi_\kappa}(x)}\sum\limits_{i=1}^{n}\overline{F}_{\xi_i}(x)<\infty,\\
&{\rm(iii)}& \mathbb{E}\eta^{p+1}<\infty\ \ \mathit{for\ some}\ p>J_{F_{\xi_\kappa}}^+.
\end{eqnarray*}
\end{thm}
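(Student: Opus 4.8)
The plan is to verify membership in $\mathcal{D}$ through its ratio characterization, namely by showing that
\[
\limsup_{x\to\infty}\frac{\overline{F}_{S_\eta}(x/2)}{\overline{F}_{S_\eta}(x)}<\infty ,
\]
which is enough because $F\in\mathcal{D}$ requires finiteness of this $\limsup$ for only a single $y\in(0,1)$, and I take $y=1/2$. First I would pin down a lower bound for the denominator. Since the $\xi_i$ are nonnegative, $S_n\geqslant\xi_\kappa$ for every $n\geqslant\kappa$, so $\mathbb{P}(S_n>x)\geqslant\overline{F}_{\xi_\kappa}(x)$, and summing against the law of $\eta$ gives $\overline{F}_{S_\eta}(x)\geqslant\mathbb{P}(\eta\geqslant\kappa)\,\overline{F}_{\xi_\kappa}(x)$, where $\mathbb{P}(\eta\geqslant\kappa)\geqslant\mathbb{P}(\eta=\kappa)>0$ because $\kappa\in\mathrm{supp}(\eta)$. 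It then remains to control $\overline{F}_{S_\eta}(x/2)$ from above by a constant multiple of $\overline{F}_{\xi_\kappa}(x)$.

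For the numerator I would start from the union bound for nonnegative summands: since $\{S_n>x/2\}\subseteq\bigcup_{i=1}^n\{\xi_i>x/(2n)\}$, we get $\mathbb{P}(S_n>x/2)\leqslant\sum_{i=1}^n\overline{F}_{\xi_i}(x/(2n))$. Condition (ii) then replaces this sum of individual tails by $C\,n\,\overline{F}_{\xi_\kappa}(x/(2n))$, uniformly in $n\geqslant\kappa$ once the argument $x/(2n)$ is large. Finally condition (i), $F_{\xi_\kappa}\in\mathcal{D}$, supplies a Potter-type estimate: for any $p>J^+_{F_{\xi_\kappa}}$ there are constants $C',x_1>0$ such that $\overline{F}_{\xi_\kappa}(xt)/\overline{F}_{\xi_\kappa}(x)\leqslant C'\,t^{-p}$ whenever $0<t\leqslant1$ and $xt\geqslant x_1$. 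With $t=1/(2n)$ this turns $\overline{F}_{\xi_\kappa}(x/(2n))$ into $C'(2n)^p\,\overline{F}_{\xi_\kappa}(x)$, yielding the per-term estimate $\mathbb{P}(S_n>x/2)\leqslant C''\,n^{p+1}\,\overline{F}_{\xi_\kappa}(x)$.

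The main obstacle is that this per-term bound is valid only while the shrunken argument $x/(2n)$ stays above the level beyond which both (ii) and the Potter estimate hold, i.e. only for $n\leqslant x/(2x_1)$; these estimates are asymptotic and give no uniformity once $n$ is of order $x$. I would overcome this by splitting $\sum_{n}\mathbb{P}(\eta=n)\,\mathbb{P}(S_n>x/2)$ at $n=x/(2x_1)$. On the small-$n$ range the displayed bound applies, and after summation condition (iii) finishes the job, since $\sum_n\mathbb{P}(\eta=n)\,n^{p+1}\leqslant\mathbb{E}\eta^{p+1}<\infty$ gives a contribution $\leqslant C_3\,\overline{F}_{\xi_\kappa}(x)$. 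On the large-$n$ range I bound $\mathbb{P}(S_n>x/2)\leqslant1$ and apply Markov's inequality, $\mathbb{P}(\eta>x/(2x_1))\leqslant\mathbb{E}\eta^{p+1}(2x_1/x)^{p+1}$; because $p+1>p>J^+_{F_{\xi_\kappa}}$ forces $x^{p+1}\overline{F}_{\xi_\kappa}(x)\to\infty$, this remainder is $o(\overline{F}_{\xi_\kappa}(x))$. The finitely many terms with $n<\kappa$ are each $O(\overline{F}_{\xi_\kappa}(x))$ by the same two ingredients applied at $n=\kappa$. Collecting the three pieces gives $\overline{F}_{S_\eta}(x/2)\leqslant C_5\,\overline{F}_{\xi_\kappa}(x)$, which combined with the lower bound on $\overline{F}_{S_\eta}(x)$ makes the target $\limsup$ finite and hence places $F_{S_\eta}$ in $\mathcal{D}$.
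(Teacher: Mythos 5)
Your proof is correct. It is worth noting, however, that the paper itself contains no argument for this statement: Theorem~\ref{tt2} is recalled from \cite{ds-2016} (Theorem~2.1 there), and the engine of that cited proof is precisely the estimate reproduced in the present paper as Lemma~\ref{l3}: under conditions (i)--(ii), for each $p>J^{+}_{F_{\xi_\kappa}}$ there is a constant $c_1$ with $\overline{F}_{S_n}(x)\leqslant c_1 n^{p+1}\overline{F}_{\xi_\kappa}(x)$ for all $n\geqslant\kappa$ and \emph{all} $x\geqslant 0$. Given that uniform bound, the theorem follows in a few lines: sum it against the law of $\eta$ using (iii), handle the finitely many $n<\kappa$ via $S_n\leqslant S_\kappa$, divide by the lower bound $\overline{F}_{S_\eta}(x)\geqslant \mathbb{P}(\eta=\kappa)\overline{F}_{\xi_\kappa}(x)$, and finish with $F_{\xi_\kappa}\in\mathcal{D}$ applied to the ratio $\overline{F}_{\xi_\kappa}(xy)/\overline{F}_{\xi_\kappa}(x)$. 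Your route differs in that you never prove or invoke uniformity in $x$: your chain (union bound, then condition (ii), then the Potter-type bound) yields $\mathbb{P}(S_n>x/2)\leqslant C'' n^{p+1}\overline{F}_{\xi_\kappa}(x)$ only for $n\lesssim x$, and you close the resulting gap by truncating the $n$-sum at $n\asymp x$, bounding $\mathbb{P}(\eta>x/(2x_1))$ by Markov's inequality with exponent $p+1$, and absorbing that remainder into $o(\overline{F}_{\xi_\kappa}(x))$ through the lower bound $\overline{F}_{\xi_\kappa}(x)\geqslant c\,x^{-p}$, itself a consequence of the same Potter-type estimate. Both arguments rest on the same three ingredients, so the difference is one of organization rather than substance: the cited route is modular, isolating a uniform lemma that the paper reuses verbatim in its proof of Theorem~\ref{th2}, while yours is self-contained and elementary (modulo the standard Potter-type bound for class $\mathcal{D}$). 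In fact your tail-positivity observation is exactly what upgrades the asymptotic bound to the uniform one: for $x\leqslant 2nx_1$, monotonicity and $\overline{F}_{\xi_\kappa}(z)\geqslant c\,z^{-p}$ give $n^{p+1}\overline{F}_{\xi_\kappa}(x)\geqslant c\,(2x_1)^{-p}n$, which is bounded away from zero, so the claimed inequality is trivial in that range; thus your proof implicitly contains a proof of Lemma~\ref{l3} as well.
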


A similar result for r.v.s having d.f.s with consistently varying tails
can be found in \cite[Thm.~6]{kss-2016}.

\begin{thm}\label{tt3}
Let $\{\xi_1,\xi_2,\ldots\}$ be independent real-valued r.v.s,
and $\eta$ be a coun\-ting r.v. independent of $\{\xi_1,\xi_2,\ldots\}
$. Then the d.f. $F_{S_\eta}$ belongs to the class $\mathcal{C}$ if the
following conditions are satisfied:
\begin{eqnarray*}
&{\rm(a)}& F_{\xi_{1}} \in\mathcal{C},\\
&{\rm(b)}& \mathit{for \ each}\ k \geqslant2, \ \mathit{either} \ F_{\xi_{k}} \in\mathcal{C} \ \mathit{or} \ \overline{F}_{\xi_{k}}(x) \mathop{=} o \bigl(\overline{F}_{\xi_{1}}(x) \bigr),\\
&{\rm(c)}& \limsup\limits_{x\to\infty} \sup\limits_{n \geqslant1}\frac{1}{n\overline{F}_{\xi_1}(x)}\sum\limits_{i=1}^{n}\overline{F}_{\xi_i}(x)<\infty,\\
&{\rm(d)}& \mathbb{E} \eta^{p+1} < \infty\ \ \mathit{for \ some} \ p > J^{+}_{F_{\xi_1}}.
\end{eqnarray*}
\end{thm}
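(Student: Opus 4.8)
The plan is to reduce everything to a sharp tail asymptotic for $S_\eta$ and then invoke the closure of $\mathcal{C}$ under tail-equivalence. Writing $a_i:=\mathbb{P}(\eta\geqslant i)$, set
\[
H(x):=\sum_{i=1}^{\infty} a_i\,\overline{F}_{\xi_i}(x)=\sum_{n=1}^{\infty}\mathbb{P}(\eta=n)\sum_{i=1}^{n}\overline{F}_{\xi_i}(x).
\]
My first goal is to prove $\overline{F}_{S_\eta}(x)\sim H(x)$ as $x\to\infty$, and my second goal is to prove that $H$ has a consistently varying tail. These two facts combine: if $\overline{F}\sim\overline{G}$ then for each fixed $y$ the factors in $\overline{G}(xy)/\overline{G}(x)=[\overline{G}(xy)/\overline{F}(xy)]\,[\overline{F}(xy)/\overline{F}(x)]\,[\overline{F}(x)/\overline{G}(x)]$ that do not involve $y$ tend to $1$, so $\mathcal{C}$ is closed under tail-equivalence and $F_{S_\eta}\in\mathcal{C}$ follows. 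I stress that \emph{genuine} asymptotic equivalence is needed here, not merely two-sided bounds $c_1H\leqslant\overline{F}_{S_\eta}\leqslant c_2H$, since constants $c_1\ne c_2$ would only give $F_{S_\eta}\in\mathcal{D}$.

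For the tail asymptotic I would combine a ``single big jump'' estimate with a uniform-in-$n$ tail bound. For each fixed $n$, conditions (a) and (b) give $\mathbb{P}(S_n>x)\sim\sum_{i=1}^{n}\overline{F}_{\xi_i}(x)$: the summands with $F_{\xi_i}\in\mathcal{C}\subset\mathcal{S}$ obey the subexponential convolution asymptotic, while those with $\overline{F}_{\xi_i}(x)=o(\overline{F}_{\xi_1}(x))$ are absorbed (the negative parts of the real-valued $\xi_i$ only shrink the sum, so the tail at $+\infty$ is driven by the positive parts). To pass through the random index I would control
\[
\overline{F}_{S_\eta}(x)-H(x)=\sum_{n=1}^{\infty}\mathbb{P}(\eta=n)\Bigl(\mathbb{P}(S_n>x)-\sum_{i=1}^{n}\overline{F}_{\xi_i}(x)\Bigr)
\]
by dominated convergence. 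Since $\mathcal{C}\subset\mathcal{D}$ gives $J^{+}_{F_{\xi_1}}<\infty$, condition (c) together with $\mathcal{D}$-theory yields a bound $\mathbb{P}(S_n>x)\leqslant Cn^{p+1}\overline{F}_{\xi_1}(x)$ valid for all $n$ and all large $x$, for any $p>J^{+}_{F_{\xi_1}}$. Then each term of the series, divided by $\overline{F}_{\xi_1}(x)$, tends to $0$ while being dominated by $C\mathbb{P}(\eta=n)\,n^{p+1}$, which is summable by the moment condition (d). Hence $\overline{F}_{S_\eta}(x)-H(x)=o(\overline{F}_{\xi_1}(x))$; as $H(x)\geqslant a_1\overline{F}_{\xi_1}(x)$ with $a_1=\mathbb{P}(\eta\geqslant1)>0$, this is $o(H(x))$, i.e. $\overline{F}_{S_\eta}\sim H$.

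To show $H\in\mathcal{C}$ I would verify $\lim_{y\uparrow1}\limsup_{x\to\infty}H(xy)/H(x)=1$ directly. Condition (c) gives $\sum_{i=1}^{n}\overline{F}_{\xi_i}(x)\leqslant Cn\overline{F}_{\xi_1}(x)$, whence $H(x)\leqslant C\,\mathbb{E}\eta\,\overline{F}_{\xi_1}(x)$ and thus $H(x)\asymp\overline{F}_{\xi_1}(x)$. Splitting the indices according to (b) into the block with $F_{\xi_i}\in\mathcal{C}$ and the block with $\overline{F}_{\xi_i}=o(\overline{F}_{\xi_1})$, the consistent variation of $\overline{F}_{\xi_1}$ propagates to the weighted sum once the negligible block and the infinite tail of the series are dominated uniformly by $\overline{F}_{\xi_1}$ using (c) and the summability from (d).

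The main obstacle is the uniform-in-$n$ estimate $\mathbb{P}(S_n>x)\leqslant Cn^{p+1}\overline{F}_{\xi_1}(x)$ for non-identically-distributed summands: this is the step that links the Matuszewska index $J^{+}_{F_{\xi_1}}$ to the exponent in the moment condition (d) and that legitimizes interchanging the limit with the infinite sum. A secondary difficulty is making both the fixed-$n$ single-big-jump asymptotic and the propagation of consistent variation through the infinite weighted sum sufficiently uniform; in both places the domination by $\overline{F}_{\xi_1}$ furnished by (c) is what makes the argument close.
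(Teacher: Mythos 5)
Your proposal is correct, but it follows a genuinely different route from the one the paper relies on. (Theorem~\ref{tt3} is quoted from \cite{kss-2016}; the proof given there follows the same template as this paper's proof of Theorem~\ref{th2}.) That template never establishes an asymptotic formula for $\overline{F}_{S_\eta}$: one shows directly that $\limsup_{y\uparrow1}\limsup_{x\to\infty}\overline{F}_{S_\eta}(xy)/\overline{F}_{S_\eta}(x)\leqslant1$ by splitting $\sum_{n}\mathbb{P}(\eta=n)\mathbb{P}(S_n>xy)$ at a cutoff $K$: for $n\leqslant K$ one uses that each $F_{S_n}\in\mathcal{C}$ (Lemma~\ref{l2}) together with the elementary bound of a ratio of sums by the maximal termwise ratio; for $n>K$ one bounds the numerator by $c\,n^{p+1}\overline{F}_{\xi_1}(xy)$ (Lemma~\ref{l3}, applied to positive parts) and the denominator from below by $\frac{1}{2}\overline{F}_{\xi_1}(x)\mathbb{P}(\eta=a)$ (via Lemma~\ref{l4}), and finally lets $K\to\infty$ using condition (d). You instead prove the stronger statement $\overline{F}_{S_\eta}(x)\sim H(x)$ and then transfer consistent variation by tail equivalence. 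The ingredients are the same two facts, fixed-$n$ single-big-jump asymptotics and the Matuszewska-index uniform bound, but you use them to control the difference $\overline{F}_{S_\eta}-H$ by dominated convergence rather than to bound a ratio. Your approach buys more: an explicit max-sum-equivalence formula for a random sum of non-identically distributed summands (the paper's introduction notes that such asymptotic formulas were unavailable in this inhomogeneous setting, and your argument shows they do hold under (a)--(d)), from which $F_{S_\eta}\in\mathcal{C}$ is an easy corollary. The costs are two verifications you should make explicit: (i) for fixed $n$, absorbing the summands with $\overline{F}_{\xi_k}(x)=o(\overline{F}_{\xi_1}(x))$ requires the standard lemma that $F\in\mathcal{S}$ and $\overline{G}(x)=o(\overline{F}(x))$ imply $\overline{F*G}(x)\sim\overline{F}(x)$; your remark that the negative parts ``only shrink the sum'' gives an upper bound but not the asymptotic equivalence for real-valued summands; and (ii) the domination $\bigl|\mathbb{P}(S_n>x)-\sum_{i=1}^n\overline{F}_{\xi_i}(x)\bigr|\leqslant Cn^{p+1}\overline{F}_{\xi_1}(x)$, uniform in $n$ for all large $x$, which indeed follows from (c) and Lemma~\ref{l3}, together with $\mathbb{E}\eta<\infty$ (needed both for $H\asymp\overline{F}_{\xi_1}$ and for $H\in\mathcal{C}$), which follows from (d) since $p+1>1$. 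With these points filled in, your plan closes.
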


In this work, we consider the randomly stopped maximum $\xi_{(\eta)}$
and randomly stopped maximum of sums $S_{(\eta)}$ of independent but
not necessarily identically\vadjust{\eject} distributed r.v.s. As was noted before, we
restrict our consideration to the class $\mathcal{C}$ but extend it to
the real-valued r.v.s as in Theorem~\ref{tt3}.

If r.v.s $\{\xi_1,\xi_2,\ldots\}$ are not identically distributed,
then different collections of conditions on r.v.s $\{\xi_1,\xi_2,\ldots
\}$ and $\eta$ imply that $F_{\xi_{(\eta)}}\in\mathcal{C}$ or
$F_{S_{(\eta)}}\in\mathcal{C}$. We suppose that some r.v.s from $\{\xi
_1,\xi_2, \ldots\}$ have distributions belonging to the class $
\mathcal{C}$, and we find conditions for r.v.s $\{\xi_1,\xi_2, \ldots\}
$ and $\eta$ such that the distribution of the randomly stopped maximum
or the randomly stopped maximum of sums remains in the same class. The
results presented and their proofs are closely related to the results
of the papers \cite{ds-2016,dss-2016,kss-2016}.

It is worth noting that the closure properties for d.f.s $F_{S_{(\eta
)}}$ in the case of i.i.d. r.v.s can be derived, for instance, from the
asymptotic formulas obtained in \cite{dfk-2010,kt-2003,nty-2002,wt-2004,zcw-2011}. Unfortunately, in the
case of nonidentically distributed r.v.s, similar asymptotic formulas
do not exist. Therefore we have to use other methods to prove our main results.

The rest of the paper is organized as follows. In Section~\ref{main},
we present our main results together with a few examples of randomly
stopped maximum $\xi_{(\eta)}$ and randomly stopped maximum of sums
$S_{(\eta)}$ with d.f.s having consistently varying tails. Section~\ref{lemma} is a collection of auxiliary lemmas, and the proofs of the main
results are presented in Section~\ref{proof}.

\section{Main results}\label{main}

In this section, we present four statements, two theorems and two
corollaries. Theorem~\ref{th1} and Corollary~\ref{c1} deal with the
belonging of the d.f. $F_{\xi_{(\eta)}}$ to the class~$\mathcal{C}$.

\begin{thm}\label{th1}  Let $\{\xi_1,\xi_2,\dots\}$ be a
sequence of independent real-valued r.v.s, and let $\eta$ be a counting
r.v. independent of $\lbrace\xi_1,\xi_2,\dots\rbrace$. The d.f. of the
randomly stopped maximum $F_{\xi_{(\eta)} }$ belongs to the class
$\mathcal{C}$ if the following conditions hold:
\begin{align*}
&(a)\hspace{0.1cm}F_{\xi_\varkappa} \in\mathcal{C}\ \text{for some }\varkappa \in {\rm supp}(\eta),\\
&(b)\hspace{0.1cm}\text{for each }\ k\neq\varkappa, \ \text{either }\ F_{\xi_k} \in\mathcal{C}\ \text{or }\ \overline{F}_{\xi_k}(x)= o \bigl(\overline{F}_{\xi_\varkappa}(x)\bigr), \\
&(c)\hspace{0.1cm}\limsup_{x\to\infty}\sup_{n\geqslant1} \frac{1}{\varphi(n)\overline{F}_{\xi_\varkappa}(x)}\sum\limits_{k=1}^n\overline{F}_{\xi_k}(x)<\infty,
\end{align*}
where $\{\varphi(n)\}_{n=1}^\infty$ is a positive sequence such that
$\mathbb{E} (\varphi(\eta)\ind_{[1,\infty)}(\eta) )<\infty$.
\end{thm}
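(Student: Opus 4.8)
The plan is to reduce everything to the tail $\overline F_{\xi_\varkappa}$ of the single ``dominant'' summand and then to import its consistent variation. First I would use independence to write, for each $n$, $\mathbb{P}(\xi_{(n)}>x)=1-\prod_{k=1}^n F_{\xi_k}(x)$, so that
\[
\overline F_{\xi_{(\eta)}}(x)=\sum_{n=1}^\infty \mathbb{P}(\eta=n)\Bigl(1-\prod_{k=1}^n F_{\xi_k}(x)\Bigr).
\]
The easy lower bound is $\overline F_{\xi_{(\eta)}}(x)\geqslant \mathbb{P}(\eta\geqslant\varkappa)\,\overline F_{\xi_\varkappa}(x)$, which is useful because $\mathbb{P}(\eta\geqslant\varkappa)>0$ by $(a)$; the matching upper bound comes from the union bound $\mathbb{P}(\xi_{(n)}>x)\leqslant \sum_{k=1}^n\overline F_{\xi_k}(x)$ together with $(c)$ and the integrability of $\varphi(\eta)\ind_{[1,\infty)}(\eta)$, giving $\overline F_{\xi_{(\eta)}}(x)\leqslant C\,\mathbb{E}\bigl(\varphi(\eta)\ind_{[1,\infty)}(\eta)\bigr)\,\overline F_{\xi_\varkappa}(x)$ for large $x$. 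These two bounds already place $F_{\xi_{(\eta)}}$ in $\mathcal D$ and, again via $(c)$, show that each ratio $\overline F_{\xi_k}/\overline F_{\xi_\varkappa}$ stays bounded.

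The substance of the proof is to upgrade $\mathcal D$ to the exact limit $1$ required by $\mathcal C$. Fix $y\in(0,1)$ and study the increment $\overline F_{\xi_{(\eta)}}(xy)-\overline F_{\xi_{(\eta)}}(x)\geqslant0$. A telescoping of $\prod_k F_{\xi_k}(x)-\prod_k F_{\xi_k}(xy)$ yields
\[
\mathbb{P}(\xi_{(n)}>xy)-\mathbb{P}(\xi_{(n)}>x)\leqslant \sum_{j=1}^n\bigl(\overline F_{\xi_j}(xy)-\overline F_{\xi_j}(x)\bigr),
\]
and I would split the resulting series over $n$ at a level $N$. For the tail $n>N$ I would discard the subtracted term and use the union bound and $(c)$ at the point $xy$, so that the tail is dominated by $C\,\overline F_{\xi_\varkappa}(xy)\sum_{n>N}\varphi(n)\mathbb{P}(\eta=n)$; dividing by the lower bound, its $\limsup_x$ is controlled by $\limsup_x \overline F_{\xi_\varkappa}(xy)/\overline F_{\xi_\varkappa}(x)$ times the convergent remainder $\sum_{n>N}\varphi(n)\mathbb{P}(\eta=n)$, which is small for large $N$. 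For the finite head $n\leqslant N$, I would pass to the limit term by term: since every product in the telescoping tends to $1$, each summand behaves like $(\overline F_{\xi_j}(xy)-\overline F_{\xi_j}(x))/\overline F_{\xi_\varkappa}(x)$, which tends to $0$ as $y\uparrow1$ --- using $(b)$ directly when $\overline F_{\xi_j}=o(\overline F_{\xi_\varkappa})$, and, when $F_{\xi_j}\in\mathcal C$, writing it as $(\overline F_{\xi_j}(x)/\overline F_{\xi_\varkappa}(x))(\overline F_{\xi_j}(xy)/\overline F_{\xi_j}(x)-1)$ with the first factor bounded and the second vanishing as $y\uparrow1$ by consistent variation of $F_{\xi_j}$.

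Putting these together gives, for every $N$,
\[
\lim_{y\uparrow1}\limsup_{x\to\infty}\frac{\overline F_{\xi_{(\eta)}}(xy)-\overline F_{\xi_{(\eta)}}(x)}{\overline F_{\xi_{(\eta)}}(x)}\leqslant \frac{C}{\mathbb{P}(\eta\geqslant\varkappa)}\sum_{n>N}\varphi(n)\mathbb{P}(\eta=n),
\]
where I have used $F_{\xi_\varkappa}\in\mathcal C$ to send the factor $\limsup_x\overline F_{\xi_\varkappa}(xy)/\overline F_{\xi_\varkappa}(x)$ to $1$; letting $N\to\infty$ makes the right-hand side vanish, which is exactly $F_{\xi_{(\eta)}}\in\mathcal C$.

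I expect the main obstacle to be the order of limits in the last step: I must first fix $N$, take $x\to\infty$, then $y\uparrow1$, and only finally $N\to\infty$, and it is precisely condition $(c)$ (uniformity in $n$) together with the integrability of $\varphi(\eta)\ind_{[1,\infty)}(\eta)$ that makes the tail over $n>N$ uniformly negligible while the consistent variation of the individual dominant-type tails disposes of the finite head. The delicate point is that the crude two-sided bound only yields membership in $\mathcal D$; extracting the exact constant $1$ forces one to keep the cancellation in the increment rather than bounding the two tails separately.
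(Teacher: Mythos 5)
Your proof is correct, but it handles the decisive part of the argument by a genuinely different device than the paper. Both arguments share the same skeleton: truncate the sum over $n$ at a finite level, control the tail $n>K$ by the union bound, condition (c), the crude lower bound $\overline F_{\xi_{(\eta)}}(x)\geqslant \mathbb{P}(\eta=\varkappa)\,\overline F_{\xi_\varkappa}(x)$, and the integrability of $\varphi(\eta)\ind_{[1,\infty)}(\eta)$, letting $K\to\infty$ only at the very end --- so your tail estimate is essentially the paper's term $\mathcal{J}_2$. The difference is in the finite head. The paper keeps the ratio $\overline F_{\xi_{(\eta)}}(xy)/\overline F_{\xi_{(\eta)}}(x)$ and must show that the denominator asymptotically dominates the union-bound numerator; it does this by splitting off the exceptional indices (those with $F_{\xi_k}\notin\mathcal{C}$ and $\overline F_{\xi_k}(x)=o(\overline F_{\xi_\varkappa}(x))$), factoring out $\max_k \overline F_{\xi_k}(xy)/\overline F_{\xi_k}(x)$, and invoking the Bonferroni inequality $\overline F_{\xi_{(n)}}(x)\geqslant\sum_{k=1}^n\overline F_{\xi_k}(x)\bigl(1-\sum_{k=1}^K\overline F_{\xi_k}(x)\bigr)$, which is what recovers the exact constant $1$ up to an arbitrary $\varepsilon$. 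You instead pass to the increment $\overline F_{\xi_{(\eta)}}(xy)-\overline F_{\xi_{(\eta)}}(x)$, exploit the exact product representation $1-\prod_{k\leqslant n}F_{\xi_k}$, and telescope, so the cancellation is retained term by term: each head contribution $(\overline F_{\xi_j}(xy)-\overline F_{\xi_j}(x))/\overline F_{\xi_\varkappa}(x)$ dies in the iterated limit, both alternatives of (b) are handled in one stroke, and no Bonferroni inequality or $\varepsilon$-bookkeeping is needed. Your route is thus more elementary and the constant $1$ emerges automatically; the paper's route yields in passing the max--sum comparison $\sum_{k\leqslant n}\overline F_{\xi_k}(x)\leqslant(1+\varepsilon)\overline F_{\xi_{(n)}}(x)$, of independent interest. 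Both ultimately rest on independence (yours through the product formula, the paper's through the second-order Bonferroni term). One point you should make explicit in your ``$o$-small'' case: to get $\overline F_{\xi_j}(xy)/\overline F_{\xi_\varkappa}(x)\to0$ for fixed $y\in(0,1)$, factor it as $\bigl(\overline F_{\xi_j}(xy)/\overline F_{\xi_\varkappa}(xy)\bigr)\bigl(\overline F_{\xi_\varkappa}(xy)/\overline F_{\xi_\varkappa}(x)\bigr)$ and use $F_{\xi_\varkappa}\in\mathcal{C}\subset\mathcal{D}$ for the second factor --- the same device the paper uses for its exceptional set; since you already recorded this $\mathcal{D}$-boundedness in your opening paragraph, this is a one-line fix, not a gap.
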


If r.v.s $\{\xi_1,\xi_2,\ldots\}$ are identically distributed with
common d.f. $F_{\xi} \in\mathcal{C}$, then conditions (a), (b), and
(c) are satisfied with $\varphi(n)=n$ for $n\in\mathbb{N}$. Hence, the
following statement immediately follows from Theorem~\ref{th1}.

\begin{cor}\label{c1} Let $\{\xi_1,\xi_2,\dots\}$ be a
sequence of i.i.d. real-valued r.v.s with common d.f. $F_\xi\in\mathcal
{C}$, and let $\eta$ be a counting r.v. independent of $\lbrace\xi
_1,\xi_2,\dots\rbrace$. The d.f.\ of the randomly stopped maximum
$F_{\xi_{(\eta)} }$ belongs to the class $\mathcal{C}$ if \,$\mathbb
{E}\eta$\, is finite.
\end{cor}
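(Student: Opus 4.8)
The plan is to derive Corollary~\ref{c1} directly from Theorem~\ref{th1} by checking that the three hypotheses (a), (b), (c) hold in the i.i.d.\ setting with the specific choice $\varphi(n)=n$. The author's remark preceding the corollary already asserts this, so my task is to verify each condition and confirm that the integrability requirement on $\varphi(\eta)$ reduces to $\mathbb{E}\eta<\infty$.

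First I would note that since $\{\xi_1,\xi_2,\dots\}$ are i.i.d.\ with common d.f.\ $F_\xi\in\mathcal{C}$, every $F_{\xi_k}=F_\xi$. Picking any $\varkappa\in{\rm supp}(\eta)$ (which is nonempty and contains some positive integer because $\eta$ is nondegenerate at zero), condition (a) holds because $F_{\xi_\varkappa}=F_\xi\in\mathcal{C}$. Condition (b) holds trivially: for each $k\neq\varkappa$ we have $F_{\xi_k}=F_\xi\in\mathcal{C}$, so the first alternative is satisfied and there is no need to invoke the little-$o$ comparison. The only genuine computation is condition (c). With $\varphi(n)=n$ and $\overline{F}_{\xi_k}(x)=\overline{F}_{\xi_\varkappa}(x)=\overline{F}_\xi(x)$ for all $k$, the inner sum telescopes: for every $n\geqslant1$,
\[
\frac{1}{\varphi(n)\,\overline{F}_{\xi_\varkappa}(x)}\sum_{k=1}^{n}\overline{F}_{\xi_k}(x)
=\frac{1}{n\,\overline{F}_\xi(x)}\cdot n\,\overline{F}_\xi(x)=1,
\]
so the supremum over $n\geqslant1$ equals $1$ for every fixed $x$ (whenever $\overline{F}_\xi(x)>0$), whence the $\limsup$ in $x$ is also $1<\infty$ and (c) holds.

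Finally I would verify the side condition on $\varphi$. With $\varphi(n)=n$, the requirement $\mathbb{E}\bigl(\varphi(\eta)\ind_{[1,\infty)}(\eta)\bigr)<\infty$ becomes $\mathbb{E}\bigl(\eta\,\ind_{[1,\infty)}(\eta)\bigr)=\mathbb{E}\eta<\infty$, since the term at $\eta=0$ contributes nothing. This is precisely the hypothesis of the corollary. Hence all conditions of Theorem~\ref{th1} are met, and the conclusion $F_{\xi_{(\eta)}}\in\mathcal{C}$ follows immediately.

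There is no real obstacle here: the corollary is a specialization, and the work is confined to the bookkeeping of substituting equal tails and the linear sequence $\varphi(n)=n$ into conditions (a)--(c) and the integrability constraint. The only point requiring a moment's care is ensuring that the chosen $\varkappa$ indeed lies in ${\rm supp}(\eta)$, which is guaranteed by the nondegeneracy of $\eta$; everything else is a direct verification.
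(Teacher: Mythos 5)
Your proof is correct and takes essentially the same route as the paper, which obtains Corollary~\ref{c1} by observing that in the i.i.d.\ case conditions (a)--(c) of Theorem~\ref{th1} are satisfied with $\varphi(n)=n$, so that the integrability requirement reduces to $\mathbb{E}\eta<\infty$ --- exactly your verification, including the correct care in choosing $\varkappa\geqslant 1$ in ${\rm supp}(\eta)$. One cosmetic slip: the sum in condition (c) does not ``telescope,'' it is simply $n$ identical terms, but this affects nothing.
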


In Theorem~\ref{th2} and Corollary~\ref{c2}, we present conditions
under which the d.f. of the randomly stopped maximum of sums $S_{(\eta
)}$ has consistently varying tail.

\begingroup
\abovedisplayskip=7.5pt
\belowdisplayskip=7.5pt
\begin{thm}\label{th2}
Let $\{\xi_1,\xi_2,\dots\}$ be independent real-valued r.v.s,
and let $\eta$ be a~counting r.v. independent of $\lbrace\xi_1,\xi
_2,\dots\rbrace$. The d.f. $F_{S_{(\eta)} }$ belongs to the class~
$\mathcal{C}$ if the following conditions hold:\vadjust{\eject}
\begin{align*}
\notag&(a)\hspace{0.1cm}F_{\xi_k} \in\mathcal{C} \text{ for each}\ k \in \mathbb{N},\\
\notag&(b)\hspace{0.1cm}\limsup_{x\to\infty}\sup_{n\geqslant1}\frac{1}{n\overline{F}_{\xi_1}(x)}\sum\limits_{k=1}^{n}\overline{F}_{\xi_k}(x)<\infty,\\
\notag&(c)\hspace{0.1cm}\mathbb{E}\eta^{p+1}<\infty\text{ for some }p>J_{F_{\xi_1}}^+.
\end{align*}
\end{thm}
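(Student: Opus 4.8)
The plan is to deduce the statement from the already-established Theorem~\ref{tt3} by showing that the randomly stopped maximum of sums is tail-equivalent to the randomly stopped sum. First I note that conditions $(a)$--$(c)$ of the present theorem are exactly conditions $(a)$--$(d)$ of Theorem~\ref{tt3} with $\varkappa=1$: indeed $(a)$ forces $F_{\xi_1}\in\mathcal{C}$ and, since \emph{every} $F_{\xi_k}\in\mathcal{C}$, the alternative in Theorem~\ref{tt3}$(b)$ holds trivially, while $(b)$ and $(c)$ here coincide with Theorem~\ref{tt3}$(c)$ and $(d)$. Hence $F_{S_\eta}\in\mathcal{C}$. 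Because $S_{(\eta)}\geqslant S_\eta$ pointwise, we have $\overline{F}_{S_{(\eta)}}(x)\geqslant\overline{F}_{S_\eta}(x)$ for all $x>0$, so it suffices to prove the reverse asymptotic inequality $\overline{F}_{S_{(\eta)}}(x)\leqslant(1+o(1))\overline{F}_{S_\eta}(x)$; then $\overline{F}_{S_{(\eta)}}\sim\overline{F}_{S_\eta}$, and since the class $\mathcal{C}$ is invariant under tail-equivalence (in the limit the ratio $\overline{F}(xy)/\overline{F}(x)$ is unchanged by replacing $F$ by a tail-equivalent d.f.), the membership $F_{S_{(\eta)}}\in\mathcal{C}$ follows.

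To control the difference I would write
\[
\overline{F}_{S_{(\eta)}}(x)-\overline{F}_{S_\eta}(x)=\sum_{n=1}^{\infty}\mathbb{P}(\eta=n)\bigl(\mathbb{P}(S_{(n)}>x)-\mathbb{P}(S_n>x)\bigr)
\]
and split the sum at a threshold $N$. For the finite range $1\leqslant n\leqslant N$ I would invoke the auxiliary lemmas of Section~\ref{lemma}, which give, for each fixed $n$, the single-big-jump equivalences $\mathbb{P}(S_{(n)}>x)\sim\mathbb{P}(S_n>x)\sim\sum_{k=1}^{n}\overline{F}_{\xi_k}(x)$ (valid since $\mathcal{C}\subset\mathcal{S}$ and $(a)$, $(b)$ hold); consequently each summand is $o(\overline{F}_{\xi_1}(x))$, so the finite sum is $o(\overline{F}_{\xi_1}(x))$, and since the single-big-jump lower bound yields $\overline{F}_{S_\eta}(x)\geqslant c\,\overline{F}_{\xi_1}(x)$ for some $c>0$ and all large $x$, this is $o(\overline{F}_{S_\eta}(x))$. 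For the tail range $n>N$ I would bound $\mathbb{P}(S_{(n)}>x)-\mathbb{P}(S_n>x)\leqslant\mathbb{P}(S_{(n)}>x)$ and use the pointwise estimate $S_{(n)}\leqslant\sum_{i=1}^{n}\xi_i^{+}$, where $\xi_i^{+}:=\max\{\xi_i,0\}$, so that $\{S_{(n)}>x\}\subseteq\bigcup_{i=1}^{n}\{\xi_i>x/n\}$ and
\[
\mathbb{P}(S_{(n)}>x)\leqslant\sum_{i=1}^{n}\overline{F}_{\xi_i}(x/n)\leqslant C\,n\,\overline{F}_{\xi_1}(x/n)\leqslant C'\,n^{p+1}\,\overline{F}_{\xi_1}(x),
\]
where the middle inequality is condition $(b)$ and the last one is the Matuszewska-index estimate $\overline{F}_{\xi_1}(x/n)\leqslant C\,n^{p}\,\overline{F}_{\xi_1}(x)$ (valid for large $x$ because $F_{\xi_1}\in\mathcal{D}$ and $p>J^{+}_{F_{\xi_1}}$). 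Summing, the tail contribution is at most $C'\,\overline{F}_{\xi_1}(x)\sum_{n>N}\mathbb{P}(\eta=n)\,n^{p+1}$, whose coefficient tends to $0$ as $N\to\infty$ thanks to $\mathbb{E}\eta^{p+1}<\infty$. Letting first $x\to\infty$ and then $N\to\infty$ yields $\overline{F}_{S_{(\eta)}}(x)-\overline{F}_{S_\eta}(x)=o(\overline{F}_{S_\eta}(x))$, as required.

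The main obstacle is the tail range: the crux is the \emph{uniform} bound $\mathbb{P}(S_{(n)}>x)\leqslant C'\,n^{p+1}\overline{F}_{\xi_1}(x)$, with the exponent $p+1$ exactly matching the moment hypothesis $\mathbb{E}\eta^{p+1}<\infty$. A naive union bound over the partial sums, $\mathbb{P}(S_{(n)}>x)\leqslant\sum_{k=1}^{n}\mathbb{P}(S_k>x)$, would cost an extra power of $n$ and force the stronger assumption $\mathbb{E}\eta^{p+2}<\infty$; passing instead through $S_{(n)}\leqslant\sum_{i=1}^{n}\xi_i^{+}$ before applying the pigeonhole inclusion is what keeps the exponent sharp. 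Assembling the single-big-jump asymptotics on the finite range (so that the resulting class is genuinely $\mathcal{C}$ and not merely $\mathcal{D}$) with this uniform tail estimate is the technical heart of the argument.
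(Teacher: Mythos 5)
Your plan is genuinely different from the paper's proof: the paper never establishes tail equivalence of $S_{(\eta)}$ and $S_\eta$ and never invokes Theorem~\ref{tt3}; it bounds $\limsup_{y\uparrow1}\limsup_{x\to\infty}\overline{F}_{S_{(\eta)}}(xy)/\overline{F}_{S_{(\eta)}}(x)$ directly, splitting the series defining $\overline{F}_{S_{(\eta)}}$ at a finite level $K$, treating the head with Lemmas~\ref{l1} and~\ref{l2} and the tail with Lemma~\ref{l3} plus a lower bound for the denominator. Much of your reduction is sound: conditions (a)--(c) do imply the hypotheses of Theorem~\ref{tt3} (so $F_{S_\eta}\in\mathcal{C}$), the class $\mathcal{C}$ is closed under tail equivalence, your finite-range estimate is correct (though it should be justified by Lemmas~\ref{l1} and~\ref{l4}, which are stated for $\mathcal{L}$ and $\mathcal{C}$, rather than by ``$\mathcal{C}\subset\mathcal{S}$''; for non-identically distributed summands subexponentiality alone gives no single-big-jump asymptotics), and your lower bound $\overline{F}_{S_\eta}(x)\geqslant c\,\overline{F}_{\xi_1}(x)$ is exactly the paper's own denominator estimate.

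The genuine gap is in the tail range, at precisely the step you call the crux. In the chain
\[
\mathbb{P}(S_{(n)}>x)\leqslant\sum_{i=1}^{n}\overline{F}_{\xi_i}(x/n)\leqslant C\,n\,\overline{F}_{\xi_1}(x/n)\leqslant C'\,n^{p+1}\,\overline{F}_{\xi_1}(x),
\]
the second inequality (condition (b)) holds only when its argument exceeds some threshold $x_0$, and the third (the Matuszewska-index estimate) only when $x/n$ exceeds some $x_1$: your parenthetical ``valid for large $x$'' must actually read ``valid for large $x/n$''. Since the tail sum runs over \emph{all} $n>N$, for every fixed $x$ the infinitely many terms with $n>x/\max(x_0,x_1)$ are not covered by the bound at all. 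Letting $x\to\infty$ first does not rescue this: to pull $\limsup_{x\to\infty}$ inside the infinite sum you need a dominating bound valid simultaneously for all $n$ at each large $x$, and that uniformity is exactly what is missing.

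The gap can be repaired in two ways. Elementarily: for $n\leqslant x/x_0$ your chain is fine, while for $n>x/x_0$ use the trivial bound $\mathbb{P}(S_{(n)}>x)\leqslant1$ together with $\overline{F}_{\xi_1}(x)\geqslant c\,x^{-p}$ for large $x$ (a consequence of the same Matuszewska inequality with the lower argument frozen at $x_1$, since $F_{\xi_1}\in\mathcal{D}$ and $p>J^{+}_{F_{\xi_1}}$); then $n^{p+1}\overline{F}_{\xi_1}(x)\geqslant n\,(x/x_0)^{p}\,c\,x^{-p}\geqslant c\,x_0^{-p}$, so the desired bound also holds in this regime, with a different constant. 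Or by citation: the uniform estimate you need, valid for all $n\geqslant1$ and all $x\geqslant0$, is exactly the statement of Lemma~\ref{l3} applied to the nonnegative r.v.s $\xi_i^{+}$ (admissible because $F_{\xi_1}\in\mathcal{C}$ implies $F_{\xi_1^{+}}\in\mathcal{D}$ and condition (b) transfers to the positive parts); this is precisely what the paper does after the same reduction $S_{(n)}\leqslant\xi_1^{+}+\cdots+\xi_n^{+}$ that you use, instead of attempting a pigeonhole proof. With either patch, your argument is complete.
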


Similarly to Corollary~\ref{c1}, we can state the following corollary.
We note that in the i.i.d. case condition (b) is obviously satisfied if
$\overline{F}_{\xi_1}(x)>0$ for all $x\in\mathbb{R}$.

\begin{cor}\label{c2}
\textit{Let $\{\xi_1,\xi_2,\dots\}$ be i.i.d. real-valued r.v.s with
common d.f. $F_\xi\in\mathcal{C}$, and let $\eta$ is a counting r.v.
independent of $\lbrace\xi_1,\xi_2,\ldots\rbrace$. The d.f.\  $F_{S_{(\eta
)} }$ belongs to the class $\mathcal{C}$ if $\mathbb{E}\eta^{p+1}<\infty
$ for some $p>J_{F_{\xi}}^+$.}
\end{cor}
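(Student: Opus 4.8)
The plan is to derive this corollary directly from Theorem~\ref{th2} by verifying that its three hypotheses (a), (b), and (c) are automatically fulfilled in the i.i.d.\ setting and then invoking that theorem. Since all the $\xi_k$ share the common d.f. $F_\xi\in\mathcal{C}$, condition (a) of Theorem~\ref{th2} holds at once, because $F_{\xi_k}=F_\xi\in\mathcal{C}$ for every $k\in\mathbb{N}$.

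For condition (b), I would first record that the inclusion $\mathcal{C}\subset\mathcal{H}$ forces $\overline{F}_\xi(x)>0$ for every real $x$: if $\overline{F}_\xi(x_0)=0$ for some $x_0$, then monotonicity of $\overline{F}_\xi$ would give $\overline{F}_\xi(x)=0$ for all $x\geqslant x_0$, contradicting $\lim_{x\to\infty}\overline{F}_\xi(x)\mathrm{e}^{\delta x}=\infty$. Hence the quotient in (b) is well defined, and because $\overline{F}_{\xi_k}(x)=\overline{F}_\xi(x)$ for every $k$, it collapses to
\[
\frac{1}{n\,\overline{F}_{\xi_1}(x)}\sum_{k=1}^n\overline{F}_{\xi_k}(x)=\frac{n\,\overline{F}_\xi(x)}{n\,\overline{F}_\xi(x)}=1
\]
for all $n\geqslant1$ and all $x$. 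Thus the inner supremum over $n\geqslant1$ and the outer $\limsup$ as $x\to\infty$ are both equal to $1<\infty$, so condition (b) is satisfied.

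Finally, since $F_{\xi_1}=F_\xi$, the upper Matuszewska indices coincide, $J_{F_{\xi_1}}^+=J_{F_\xi}^+$, so the assumption $\mathbb{E}\eta^{p+1}<\infty$ for some $p>J_{F_\xi}^+$ is exactly condition (c) of Theorem~\ref{th2}. With (a), (b), and (c) all in place, Theorem~\ref{th2} yields $F_{S_{(\eta)}}\in\mathcal{C}$, which is the assertion.

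Because every step is a routine specialization of Theorem~\ref{th2}, I do not expect any genuine obstacle. The only point that needs a moment of care is the strict positivity of $\overline{F}_\xi$, which is required merely to make the ratio in condition (b) meaningful; this is dispatched by the heavy-tail property inherited from $F_\xi\in\mathcal{C}$.
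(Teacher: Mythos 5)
Your proof is correct and follows essentially the same route as the paper: the corollary is obtained by checking that conditions (a), (b), and (c) of Theorem~\ref{th2} hold automatically in the i.i.d.\ case. Your one addition — deriving the strict positivity of $\overline{F}_\xi$ from $\mathcal{C}\subset\mathcal{H}$ — is a nice touch, since the paper merely remarks that (b) is obvious \emph{if} $\overline{F}_{\xi_1}(x)>0$ for all $x$, without justifying that this positivity is automatic for $F_\xi\in\mathcal{C}$.
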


Further in this section, we present two examples of r.v.s $\{\xi_1,\xi
_2,\ldots\}$ and $\eta$ showing the applicability of our theorems.
\begin{ex}\label{ex1}
Let $\{\xi_1,\xi_2,\ldots\}$ be independent r.v.s such that:
\begin{itemize}
\item r.v.s $\xi_k$ are exponentially distributed for $k\equiv\,0\,{\rm mod}\,3$, that is,
\[
{\overline{F}}_{\xi_k}(x)=\ind_{(-\infty,0)}(x)+{\rm e}^{-x}
\ind _{[0,\infty)}(x),\quad  k\in\{3,6,9,\ldots\};
\]
\item r.v.s $\xi_k$ are degenerate at zero for all $k\equiv
\, 2\,{\rm mod}\,3$ and $k\equiv\, 1\,{\rm mod}\,3, k\geqslant4$, that~is,
\[
{F}_{\xi_k}(x)=\ind_{[0,\infty)}(x),\quad  k\in\{2,5,8,\ldots\}\cup\{4,7,10,\ldots\};
\]
\item $\xi_1=(1+\mathcal{U})2^{\mathcal{G}}$, where r.v.s
$\mathcal{U}$ and $\mathcal{G}$ are independent, $\mathcal{U}$ is
uniformly distributed on the interval $[0,1]$, and $\mathcal{G}$ is
geometrically distributed with parameter $q\in(0,1)$, that is,
\[
\mathbb{P}(\mathcal{G}=l)=(1-q)q^l,\quad  l\in\{0,1,2,\ldots\}.
\]
In addition, let $\eta$ be a counting r.v. independent of $\{\xi_1,\xi
_2,\ldots\}$.
\end{itemize}

Theorem~\ref{th1} implies that the d.f. of the randomly stopped maximum
$\xi_{(\eta)}$ belongs to the class $\mathcal{C}$ if $1\in{\rm
supp}(\eta)$ and $\mathbb{E}\eta<\infty$ because:
\begin{itemize}
\item $F_{\xi_1}\in\mathcal{C}$, but $F_{\xi_1}\notin\mathcal{R}$ due to considerations in pp. 122--123 of \cite{cai-tang},
\item $\overline{F}_{\xi_k}(x)=o(\overline{F}_{\xi_1}(x))$ for each $k\neq1$,
\item $\sup\limits_{n\geqslant1}\frac{1}{n\overline{F}_{\xi_1}(x)}\sum\limits_{k=1}^n\overline{F}_{\xi_k}(x)\leqslant2$ for sufficiently large $x$.
\end{itemize}
\end{ex}

\begin{ex}\label{ex2} Let $\{\xi_1,\xi_2,\ldots\}$ be
independent r.v.s distributed according to Pareto-type laws, namely,
\begin{eqnarray*}
{\overline{F}}_{\xi_k}(x)=\ind_{(-\infty,0)}(x)+\frac{1}{(1+x)^3}\ind_{[0,\infty)}(x) \quad \text{if}\ k\in\{1,3,5,\ldots\},\\
{\overline {F}}_{\xi_k}(x)=\ind_{(-\infty,-1)}(x)+\frac{1}{(2+x)^3}\ind_{[-1,\infty)}(x) \quad \text{if}\ k\in\{2,4,6,\ldots\}.
\end{eqnarray*}
Further, let $\eta$ be a counting r.v. independent of $\{\xi_1,\xi_2,\ldots\}$ that has the zeta distribution with parameter 6, that is,
\[
\mathbb{P}(\eta=m)=\frac{1}{\zeta(6)}\frac{1}{(m+1)^6},\quad  m\in\{ 0,1,2,\ldots\},
\]
where $\zeta$ denotes the Riemann zeta function.

\medskip

Theorem~\ref{th2} implies that the d.f. of the randomly stopped maximum
of sums $S_{(\eta)}$ belongs to the class $\mathcal{C}$ because:
\begin{itemize}
\item $F_{\xi_k}\in\mathcal{R}\subset\mathcal{C}$ for each $k\in\mathbb{N}$;
\item $\sup\limits_{n\geqslant1}\frac{1}{n\overline{F}_{\xi_1}(x)}\sum\limits_{k=1}^n\overline{F}_{\xi_k}(x)=1$ for positive $x$;
\item $J^+_{F_{\xi_1}}=3$ and $\mathbb{E}\eta^{4.5}<\infty$.
\end{itemize}
\end{ex}
\endgroup

\section{Auxiliary lemmas}
\label{lemma}

In this section, we give auxiliary lemmas, which we use in the proof of
Theorem~\ref{th2}. The first lemma was proved in \cite
[Thm.~2.1]{nty-2002}; a more general case can be found in \cite
[Thm.~2.1]{chen2}. We recall only that $\mathcal{C}\subset\mathcal{L}$.
Hence, the statement of the next lemma holds for d.f.s with
consistently varying tails.

\begin{lemma}\label{l1}
Let $\{X_1,X_2,\ldots,X_n\}$ be independent real-valued r.v.s such
that $F_{X_{k}}\in\mathcal{L}$ for all $k\in\mathbb{N}$. Then
\begin{align}
\notag\mathbb{P} \Biggl( \max_{1\leqslant k\leqslant n} \sum\limits_{i=1}^{k}X_i>x \Biggr) \underset{x\to\infty}\sim\mathbb{P} \Biggl(\sum\limits_{i=1}^{n}X_i>x\Biggr).
\end{align}
\end{lemma}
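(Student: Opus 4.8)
The plan is to prove the two inequalities separately. Write $T_k=\sum_{i=1}^k X_i$ and $M_n=\max_{1\le k\le n}T_k$. The lower bound $\liminf_{x\to\infty}\mathbb{P}(M_n>x)/\mathbb{P}(T_n>x)\ge 1$ is immediate from $M_n\ge T_n$. For the reverse inequality I would induct on $n$. The case $n=1$ is trivial, as there $M_1=T_1$. For the inductive step I would peel off the first summand: setting $\widetilde M:=\max(0,X_2,X_2+X_3,\dots,X_2+\dots+X_n)$ and $V:=\sum_{i=2}^n X_i$, one has the exact identity $M_n=X_1+\widetilde M$, where $\widetilde M\ge 0$, $\widetilde M\ge V$, and $\widetilde M$ is independent of $X_1$. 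Since $\widetilde M$ is, apart from the extra term $0$, the maximum of the partial sums of $X_2,\dots,X_n$, the induction hypothesis applied to these $n-1$ variables gives $\overline F_{\widetilde M}(y)\sim\overline F_V(y)$ as $y\to\infty$.

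Conditioning on $X_1$, which is legitimate by independence, reduces the task to a convolution estimate:
\[
\mathbb{P}(M_n>x)-\mathbb{P}(T_n>x)=\int_{\mathbb R}\psi(x-t)\,dF_{X_1}(t),\qquad \psi(y):=\overline F_{\widetilde M}(y)-\overline F_V(y)\ge 0.
\]
By the induction hypothesis $\psi(y)=o(\overline F_V(y))$ as $y\to\infty$, while $\psi(y)\le\min\bigl(\overline F_{\widetilde M}(y),\,F_V(y)\bigr)$, the second bound forcing $\psi(y)\to 0$ as $y\to-\infty$. I would also record the lower bound $\mathbb{P}(T_n>x)\ge \mathbb{P}(X_1>x+b)\,\mathbb{P}(V>-b)\ge c\,\overline F_{X_1}(x)$ for a constant $b$ chosen so that $\mathbb{P}(V>-b)\ge 1/2$ and some $c>0$, using $F_{X_1}\in\mathcal L$. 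This reduces everything to showing the integral above is $o(\overline F_{X_1}(x))$.

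The heart of the matter, and where I expect the only real difficulty, is estimating $\int\psi(x-t)\,dF_{X_1}(t)$. I would invoke the standard fact that a long-tailed $F_{X_1}$ admits an insensitivity function $h$, i.e. $h(x)\to\infty$, $h(x)=o(x)$, and $\overline F_{X_1}(x\pm h(x))\sim\overline F_{X_1}(x)$, and then split the range of integration into three regions. Fixing $\varepsilon>0$ and choosing $y_0$ so that $\psi(y)\le\varepsilon\,\overline F_V(y)$ for $y\ge y_0$, the part $\{t\le x-y_0\}$ contributes at most $\varepsilon\int\overline F_V(x-t)\,dF_{X_1}(t)=\varepsilon\,\mathbb{P}(T_n>x)$. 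On the shell $\{x-y_0<t\le x+h(x)\}$ one bounds $\psi\le 1$ and uses long-tailedness together with insensitivity to get $\mathbb{P}(x-y_0<X_1\le x+h(x))=\overline F_{X_1}(x-y_0)-\overline F_{X_1}(x+h(x))=o(\overline F_{X_1}(x))$. On the far region $\{t>x+h(x)\}$ one has $x-t<-h(x)$, hence $\psi(x-t)\le F_V(-h(x))\to 0$, so this part is at most $F_V(-h(x))\,\overline F_{X_1}(x)=o(\overline F_{X_1}(x))$. Combining the three estimates and letting $\varepsilon\downarrow 0$ yields that the difference is $o(\mathbb{P}(T_n>x))$, closing the induction. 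The delicate point is that every error term must be shown to be $o(\overline F_{X_1}(x))$ rather than merely $o(1)$; the shell estimate in particular hinges on a single $h(x)$ that is simultaneously insensitive for $F_{X_1}$ and divergent, so that both the thin-shell mass and the left tail $F_V(-h(x))$ become negligible at the correct scale.
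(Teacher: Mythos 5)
Your proof is correct, but there is no in-paper argument to compare it with: the authors do not prove Lemma~\ref{l1} at all, they quote it from Theorem~2.1 of \cite{nty-2002} (with a more general version in Theorem~2.1 of \cite{chen2}). Your induction is therefore a self-contained alternative, and it holds up under scrutiny. The identity $\max_{1\leqslant k\leqslant n}\sum_{i=1}^{k}X_i=X_1+\widetilde M$ is exact, $\widetilde M$ is independent of $X_1$, and the extra term $0$ inside $\widetilde M$ is irrelevant for tail probabilities at levels $y>0$, so the induction hypothesis really does give $\overline{F}_{\widetilde M}(y)\sim\overline{F}_V(y)$. The bounds $0\leqslant\psi(y)\leqslant\min\bigl(\overline{F}_{\widetilde M}(y),F_V(y)\bigr)$ follow from $\widetilde M\geqslant V$, the second one because $\psi(y)=\mathbb{P}(\widetilde M>y,\,V\leqslant y)$. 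The three-region split of $\int\psi(x-t)\,dF_{X_1}(t)$ then works exactly as you say: the region $\{t\leqslant x-y_0\}$ is absorbed into $\varepsilon\,\mathbb{P}(T_n>x)$; the shell costs $\overline{F}_{X_1}(x-y_0)-\overline{F}_{X_1}(x+h(x))=o\bigl(\overline{F}_{X_1}(x)\bigr)$ by long-tailedness plus insensitivity; the far region costs at most $F_V(-h(x))\,\overline{F}_{X_1}(x)=o\bigl(\overline{F}_{X_1}(x)\bigr)$ by monotonicity of $F_V$ and $h(x)\to\infty$; and the error terms are then compared against $\mathbb{P}(T_n>x)\geqslant c\,\overline{F}_{X_1}(x)$, which is the step that makes the whole scheme close. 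For comparison, the published proofs this lemma is quoted from proceed differently: the usual route decomposes $\{\max_{1\leqslant k\leqslant n}S_k>x\}$ at the first epoch the level $x$ is crossed, bounds the defective paths by $\max_{k<n}\mathbb{P}(S_n-S_k\leqslant -A)$, and then invokes the (nontrivial) closure of the class $\mathcal{L}$ under convolution to replace $\mathbb{P}(S_n>x-A)$ by $\mathbb{P}(S_n>x)$. Your argument never needs convolution closure of $\mathcal{L}$; it only uses the elementary existence of an insensitivity function $h$ for a single long-tailed distribution, so it is more elementary and self-contained, at the cost of being longer than the citation the paper settles for.
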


The second auxiliary lemma was proved in \cite[Lemma 3]{kss-2016}. It
describes the situation where the d.f. of sums of independent r.v.s
belongs to the class $\mathcal{C}$.

\begin{lemma}\label{l2}
Let $\{X_1,X_2,\ldots,X_n\}$ be independent real-valued r.v.s. The
d.f. of the sum $\varSigma_n:=X_1+X_2+\cdots+X_n$ belongs to the class
$\mathcal{C}$ if the following two conditions are satisfied:
\begin{align}
\notag & (a)\hspace{0.1cm}F_{X_1} \in\mathcal{C}, \\
\notag & (b)\hspace{0.1cm}\text{for each } k \in\{ 2,\dots,n\},\ \text {either}\ F_{X_k} \in\mathcal{C}\ \text{or}\ \overline{F}_{X_k}(x)= o \bigl(\overline{F}_{X_1}(x)\bigr). \end{align}
\end{lemma}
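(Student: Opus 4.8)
The plan is to argue by induction on $n$, reducing the whole statement to the two-summand case and then iterating. The base case $n=1$ is exactly hypothesis $(a)$. For the inductive step I would write $\varSigma_n=\varSigma_{n-1}+X_n$, observe that $\{X_1,\dots,X_{n-1}\}$ still satisfies $(a)$ and $(b)$ so that $F_{\varSigma_{n-1}}\in\mathcal{C}$ by the induction hypothesis, and then apply the two-summand case to the pair $(\varSigma_{n-1},X_n)$. The only point needing care is verifying the two-summand hypothesis for this pair: if $F_{X_n}\in\mathcal{C}$ there is nothing to do, while if $\overline{F}_{X_n}(x)=o(\overline{F}_{X_1}(x))$ I would first check that $\overline{F}_{\varSigma_{n-1}}(x)\geqslant c\,\overline{F}_{X_1}(x)$ for some $c>0$ and all large $x$. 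This is immediate from $\mathbb{P}(\varSigma_{n-1}>x)\geqslant\overline{F}_{X_1}(x+N)\,\mathbb{P}(X_2+\cdots+X_{n-1}>-N)$ for fixed large $N$, together with $F_{X_1}\in\mathcal{C}\subset\mathcal{L}$; hence $\overline{F}_{X_n}(x)=o(\overline{F}_{\varSigma_{n-1}}(x))$, as required.

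Everything thus reduces to two independent r.v.s $X,Y$ with $F_X\in\mathcal{C}$ and either $F_Y\in\mathcal{C}$ or $\overline{F}_Y(x)=o(\overline{F}_X(x))$. The heart of the matter is the \emph{tail-equivalence}
\[
\overline{F}_{X+Y}(x)\sim\overline{F}_X(x)+\overline{F}_Y(x)\qquad(x\to\infty),
\]
the second summand simply being negligible in the dominated case. Granting this, membership $F_{X+Y}\in\mathcal{C}$ is cheap: using the tail-equivalence to replace $\overline{F}_{X+Y}$ by $\overline{F}_X+\overline{F}_Y$, then the elementary mediant inequality $\frac{a_1+a_2}{b_1+b_2}\leqslant\max\{a_1/b_1,a_2/b_2\}$ with $a_1=\overline{F}_X(xy),a_2=\overline{F}_Y(xy),b_1=\overline{F}_X(x),b_2=\overline{F}_Y(x)$, one obtains
\[
\limsup_{x\to\infty}\frac{\overline{F}_{X+Y}(xy)}{\overline{F}_{X+Y}(x)}\leqslant\max\Biggl\{\limsup_{x\to\infty}\frac{\overline{F}_X(xy)}{\overline{F}_X(x)},\ \limsup_{x\to\infty}\frac{\overline{F}_Y(xy)}{\overline{F}_Y(x)}\Biggr\}.
\]
Letting $y\uparrow1$, both inner quantities tend to $1$ (since $F_X,F_Y\in\mathcal{C}$), forcing the left-hand side down to $1$; in the dominated case one uses $\overline{F}_{X+Y}(x)\sim\overline{F}_X(x)$ directly with $F_X\in\mathcal{C}$.

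To establish the tail-equivalence I would prove matching bounds. The lower bound $\liminf_{x\to\infty}\overline{F}_{X+Y}(x)/(\overline{F}_X(x)+\overline{F}_Y(x))\geqslant1$ comes from the two disjoint events $\{X>x+N,\,|Y|\leqslant N\}$ and $\{Y>x+N,\,|X|\leqslant N\}$, both contained in $\{X+Y>x\}$: since $F_X,F_Y\in\mathcal{C}\subset\mathcal{L}$ we have $\overline{F}_X(x+N)\sim\overline{F}_X(x)$, and $\mathbb{P}(|X|\leqslant N),\mathbb{P}(|Y|\leqslant N)\to1$ as $N\to\infty$. For the upper bound I would fix $N>0$ and split $\{X+Y>x\}$ into the three (overlapping) pieces $\{X\leqslant N\}$, $\{Y\leqslant N\}$ and $\{X>N,\,Y>N\}$, obtaining
\[
\overline{F}_{X+Y}(x)\leqslant\overline{F}_Y(x-N)+\overline{F}_X(x-N)+\mathbb{P}(X>N,\,Y>N,\,X+Y>x).
\]
By long-tailedness the first two terms together are asymptotic to $\overline{F}_X(x)+\overline{F}_Y(x)$.

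The main obstacle is therefore the last, ``both large'' term $C(x,N):=\mathbb{P}(X>N,\,Y>N,\,X+Y>x)$, which must be shown negligible; this is exactly where dominated variation ($\mathcal{C}\subset\mathcal{D}$, giving a finite $D:=\limsup_{x\to\infty}\overline{F}_X(x/2)/\overline{F}_X(x)$) is essential. On the event defining $C(x,N)$ at least one summand exceeds $x/2$, so by independence $C(x,N)\leqslant\overline{F}_X(x/2)\,\overline{F}_Y(N)+\overline{F}_Y(x/2)\,\overline{F}_X(N)$. Dividing by $\overline{F}_X(x)+\overline{F}_Y(x)$ and invoking the dominated-variation bound for $F_X$ (and for $F_Y$ when $F_Y\in\mathcal{C}$) yields $\limsup_{x\to\infty}C(x,N)/(\overline{F}_X(x)+\overline{F}_Y(x))\leqslant D'\max\{\overline{F}_X(N),\overline{F}_Y(N)\}\to0$ as $N\to\infty$; in the dominated case one instead divides by $\overline{F}_X(x)$ and uses $\overline{F}_Y(x/2)=o(\overline{F}_X(x))$, which follows from $\overline{F}_Y(x/2)=o(\overline{F}_X(x/2))$ and $\overline{F}_X(x/2)\leqslant D\,\overline{F}_X(x)$. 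Letting first $x\to\infty$ and then $N\to\infty$ closes the upper bound, and together with the lower bound this gives the tail-equivalence, completing the argument.
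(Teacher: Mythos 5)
Your proof is correct, and it is worth noting at the outset that the paper does not actually prove Lemma~\ref{l2} at all: the lemma is imported verbatim from \cite[Lemma 3]{kss-2016}, so the fair comparison is with that reference and with the toolkit this paper otherwise assembles. Your route --- induction on $n$, reduction to two summands, and a self-contained proof of the tail equivalence $\overline{F}_{X+Y}(x)\sim\overline{F}_X(x)+\overline{F}_Y(x)$ via the split $\{X\leqslant N\}\cup\{Y\leqslant N\}\cup\{X>N,\,Y>N\}$ and the bound $\mathbb{P}(X>N,\,Y>N,\,X+Y>x)\leqslant\overline{F}_X(x/2)\,\overline{F}_Y(N)+\overline{F}_Y(x/2)\,\overline{F}_X(N)$ --- amounts to reproving by hand special cases of results the paper prefers to cite: the Cai--Tang max-sum-equivalence and convolution-closure theorem for $\mathcal{D}\cap\mathcal{L}$ \cite{cai-tang}, and Chen--Yuen's sum equivalence (Lemma~\ref{l4} of the paper), which yields your tail equivalence in one stroke, for any $n$, when all summands have d.f.s in $\mathcal{C}$; the dominated summands would then be absorbed exactly as in your induction step, by showing their tails are $o\bigl(\overline{F}_{\varSigma}(x)\bigr)$ of the tail of the $\mathcal{C}$-block. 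Every step of your argument checks out: the disjoint-events lower bound, the three-way upper split, the use of $\mathcal{C}\subset\mathcal{D}$ precisely and only where it is essential (killing the ``both large'' term, and converting $\overline{F}_{X_n}(x)=o(\overline{F}_{X_1}(x))$ into $\overline{F}_{X_n}(x)=o(\overline{F}_{\varSigma_{n-1}}(x))$ in the induction step), and the mediant inequality turning tail equivalence into $\mathcal{C}$-membership. What your approach buys is self-containedness --- nothing beyond $\mathcal{C}\subset\mathcal{L}\cap\mathcal{D}$ and elementary truncation estimates; what the citation route buys is brevity and strength, since Cai--Tang and Chen--Yuen cover broader classes (and, in the latter case, some dependence). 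One phrasing caveat, not a gap: in the dominated case $F_Y$ need not be long-tailed, so your sentence ``by long-tailedness the first two terms together are asymptotic to $\overline{F}_X(x)+\overline{F}_Y(x)$'' must there be read as $\overline{F}_X(x-N)+\overline{F}_Y(x-N)\leqslant(1+o(1))\,\overline{F}_X(x)$, which follows from $\overline{F}_Y(x-N)=o\bigl(\overline{F}_X(x-N)\bigr)$ and $F_X\in\mathcal{L}$ --- and this is indeed how you use it, since you treat that case by comparing with $\overline{F}_X$ alone.
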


The following statement was proved in \cite[Lemma 3.2]{ds-2016}. It
gives an upper estimate for the tail of the sum of r.v.s from the class
$\mathcal{D}$.

\begin{lemma}\label{l3}
Let $\{X_1,X_2,\dots\}$ be nonnegative independent r.v.s,
$F_{X_\nu}\in\mathcal{D}$ for some $\nu\geqslant1$ such that
\begin{align}
\notag\limsup_{x\to\infty}\sup_{n\geqslant\nu}\frac{1}{n\overline{F}_{X_\nu}(x)}\sum\limits_{i=1}^{n}\overline{F}_{X_i}(x)<\infty.
\end{align}
Then, for each $p>J_{F_{X_\nu}}^+$, there exists a positive constant
$c_1$ such that
\begin{align}
\notag\overline{F}_{\varSigma_n}(x)\leqslant c_1 n^{p+1}\overline {F}_{X_{\nu}}(x),
\end{align}
for all $n\geqslant\nu$ and $x\geqslant0$, where $F_{\varSigma_n}$ is the
d.f. of the sum $\varSigma_n=X_1+\cdots+X_n$.
\end{lemma}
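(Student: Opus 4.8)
The plan is to bound the tail of the sum by a sum of one-dimensional tails and then turn each of these into a controlled multiple of $\overline{F}_{X_\nu}(x)$, using the two hypotheses separately. The key analytic input is the Potter-type bound attached to the upper Matuszewska index: since $F_{X_\nu}\in\mathcal{D}$ (so $J^+_{F_{X_\nu}}<\infty$) and $p>J^+_{F_{X_\nu}}$, there are constants $C>0$ and $x_0>0$ with $\overline{F}_{X_\nu}(u)/\overline{F}_{X_\nu}(uv)\leqslant C\,v^{p}$ for all $u\geqslant x_0$ and $v\geqslant 1$. I would also record that $\mathcal{D}\subset\mathcal{H}$ forces $\overline{F}_{X_\nu}(t)>0$ for every real $t$, a fact needed at the very end. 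From the displayed domination hypothesis I extract a finite constant $M$ and a threshold $x_1$ such that $\sum_{i=1}^{n}\overline{F}_{X_i}(t)\leqslant M\,n\,\overline{F}_{X_\nu}(t)$ for all $t\geqslant x_1$ and all $n\geqslant\nu$, and I set $x_\ast:=\max\{x_0,x_1\}$.

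Using nonnegativity of the $X_i$, I invoke the elementary inclusion $\{\varSigma_n>x\}\subseteq\{\max_{1\leqslant i\leqslant n}X_i>x/n\}$ (if every $X_i\leqslant x/n$, then $\varSigma_n\leqslant x$), so the union bound gives $\overline{F}_{\varSigma_n}(x)\leqslant\sum_{i=1}^{n}\overline{F}_{X_i}(x/n)$. On the range $x\geqslant n\,x_\ast$ the point $x/n$ exceeds $x_1$, so the domination bound applies there and yields $\overline{F}_{\varSigma_n}(x)\leqslant M\,n\,\overline{F}_{X_\nu}(x/n)$; applying the Potter bound with $u=x/n\geqslant x_0$ and $v=n$ replaces $\overline{F}_{X_\nu}(x/n)$ by $C\,n^{p}\,\overline{F}_{X_\nu}(x)$, and the desired estimate follows on this range with constant $MC$.

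The delicate part, which I expect to be the main obstacle, is the complementary range $0\leqslant x<n\,x_\ast$, where the reduction loses control and I would instead use the trivial bound $\overline{F}_{\varSigma_n}(x)\leqslant 1$. Here the goal is to verify $c_1\,n^{p+1}\,\overline{F}_{X_\nu}(x)\geqslant 1$ for a constant $c_1$ independent of $n$ and $x$. Since $\overline{F}_{X_\nu}$ is nonincreasing and $x<n\,x_\ast$, we have $\overline{F}_{X_\nu}(x)\geqslant\overline{F}_{X_\nu}(n\,x_\ast)$, and the lower form of the Potter bound with $u=x_\ast\geqslant x_0$ and $v=n$ gives $\overline{F}_{X_\nu}(n\,x_\ast)\geqslant C^{-1}n^{-p}\overline{F}_{X_\nu}(x_\ast)$; hence $n^{p+1}\overline{F}_{X_\nu}(x)\geqslant C^{-1}n\,\overline{F}_{X_\nu}(x_\ast)\geqslant C^{-1}\overline{F}_{X_\nu}(x_\ast)$, a strictly positive quantity (using $\overline{F}_{X_\nu}(x_\ast)>0$ and $n\geqslant\nu\geqslant 1$). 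Taking $c_1:=\max\{MC,\;C/\overline{F}_{X_\nu}(x_\ast)\}$ then makes the inequality hold simultaneously on both ranges, for every $n\geqslant\nu$ and every $x\geqslant 0$. The genuine work lies precisely in this uniform bookkeeping at small $x$: the statement demands a single constant valid for all $x\geqslant 0$, not merely asymptotically, which is exactly why the positivity of the heavy tail and the two-sided Potter estimate are indispensable.
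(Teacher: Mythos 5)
Your proof is correct. The paper does not prove this lemma itself but quotes it from Lemma~3.2 of \cite{ds-2016}, and your argument---the inclusion $\{\varSigma_n>x\}\subseteq\bigcup_{i=1}^{n}\{X_i>x/n\}$ followed by the union bound, the Potter-type estimate attached to $p>J^{+}_{F_{X_\nu}}$, and the strict positivity of the heavy tail to absorb the range $0\leqslant x<nx_\ast$ into a single constant---is essentially the standard argument used in that reference.
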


The last useful lemma is an obvious conclusion from Theorem 3.1 in \cite{chen}.

\begin{lemma}\label{l4}
Let $\{X_1,X_2, \ldots X_n\}$ be independent real-valued r.v.s.
If ${F}_{X_{k}} \in\mathcal{C}$ for each $k\in\{1,2,\ldots,n \}$, then
\[
\mathbb{P} \Biggl(\,\sum\limits_{i=1}^{n}X_i>x\Biggr)\sim\sum\limits_{i=1}^{n}\overline{F}_{X_i}(x).
\]
\end{lemma}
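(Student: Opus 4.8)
The plan is to prove the max-sum equivalence by induction on $n$, reducing the general case to the two-summand case. The base case $n=1$ is the trivial identity $\mathbb{P}(X_1>x)=\overline{F}_{X_1}(x)$. For the inductive step I would write $\varSigma_n=\varSigma_{n-1}+X_n$ with $\varSigma_{n-1}=X_1+\cdots+X_{n-1}$, and observe that by Lemma~\ref{l2} the d.f. $F_{\varSigma_{n-1}}$ again belongs to $\mathcal{C}$. Thus the inductive step amounts to showing that for two \emph{independent} r.v.s $U$ and $V$ with $F_U,F_V\in\mathcal{C}$ one has $\mathbb{P}(U+V>x)\sim\overline{F}_U(x)+\overline{F}_V(x)$; applying this with $U=\varSigma_{n-1}$ and $V=X_n$ and then invoking the inductive hypothesis $\overline{F}_{\varSigma_{n-1}}(x)\sim\sum_{i=1}^{n-1}\overline{F}_{X_i}(x)$ yields the claim. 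Throughout I would freely use the inclusions $\mathcal{C}\subset\mathcal{L}\cap\mathcal{D}$ recalled in the introduction.

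For the two-summand statement the lower bound is the easy half. Fixing $M>0$ and using
\[
\{U+V>x\}\supseteq\{U>x+M,\,V>-M\}\cup\{V>x+M,\,U>-M\},
\]
inclusion--exclusion together with independence gives a lower estimate whose leading terms are $\mathbb{P}(V>-M)\,\overline{F}_U(x+M)$ and $\mathbb{P}(U>-M)\,\overline{F}_V(x+M)$, while the subtracted overlap $\overline{F}_U(x+M)\,\overline{F}_V(x+M)$ is $o(\overline{F}_U(x))$. Since $F_U,F_V\in\mathcal{L}$ we have $\overline{F}_U(x+M)\sim\overline{F}_U(x)$ and $\overline{F}_V(x+M)\sim\overline{F}_V(x)$, so letting first $x\to\infty$ and then $M\to\infty$ (so that $\mathbb{P}(U>-M),\mathbb{P}(V>-M)\to1$) gives $\liminf_{x\to\infty}\mathbb{P}(U+V>x)/(\overline{F}_U(x)+\overline{F}_V(x))\geqslant1$.

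The upper bound is where the work lies, and I expect it to be the main obstacle. For $0<\delta<1/2$ I would use the inclusion
\[
\{U+V>x\}\subseteq\{U>(1-\delta)x\}\cup\{V>(1-\delta)x\}\cup\{U>\delta x,\,V>\delta x\},
\]
valid since $U\leqslant\delta x$ forces $V>(1-\delta)x$ and $V\leqslant\delta x$ forces $U>(1-\delta)x$. The ``both moderately large'' term factorizes as $\overline{F}_U(\delta x)\,\overline{F}_V(\delta x)$; here dominated variation ($\mathcal{C}\subset\mathcal{D}$) bounds $\overline{F}_U(\delta x)\leqslant C_\delta\,\overline{F}_U(x)$ for large $x$, while $\overline{F}_V(\delta x)\to0$, so this term is $o(\overline{F}_U(x))$. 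The two remaining terms are controlled by consistent variation: by the very definition of $\mathcal{C}$, $\lim_{\delta\downarrow0}\limsup_{x\to\infty}\overline{F}_U((1-\delta)x)/\overline{F}_U(x)=1$, and likewise for $V$. Hence $\limsup_{x\to\infty}\mathbb{P}(U+V>x)/(\overline{F}_U(x)+\overline{F}_V(x))\leqslant1+\varepsilon(\delta)$ with $\varepsilon(\delta)\to0$, and letting $\delta\downarrow0$ closes the argument.

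Combining the two bounds establishes the two-summand equivalence and completes the induction. The delicate point is precisely the upper bound, where both properties packaged in $\mathcal{C}$ are needed simultaneously: dominated variation to kill the joint-largeness contribution, and consistent variation to pass from the truncation level $(1-\delta)x$ back to $x$ without loss in the constant. Alternatively, as the statement already notes, the whole lemma follows at once by specializing Theorem~3.1 of \cite{chen} to non-random unit weights; the inductive argument above is the self-contained route I would take if that reference were unavailable.
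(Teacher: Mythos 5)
Your proof is correct, but it follows a genuinely different route from the paper: the paper offers no argument at all for this lemma, stating only that it is an obvious consequence of Theorem~3.1 in \cite{chen}, which gives max-sum equivalence for randomly weighted sums of pairwise quasi-asymptotically independent r.v.s with consistently varying tails; independent summands with unit weights are the special case needed here, exactly the shortcut you mention in your closing sentence. Your self-contained induction is sound: the lower bound via the events $\{U>x+M,\,V>-M\}$ and $\{V>x+M,\,U>-M\}$ uses only $\mathcal{C}\subset\mathcal{L}$ and the vanishing overlap $\overline{F}_U(x+M)\,\overline{F}_V(x+M)=o(\overline{F}_U(x))$; the upper bound via the three-event inclusion is valid for any $\delta\in(0,1)$ and $x>0$, with $\mathcal{C}\subset\mathcal{D}$ killing the joint-largeness term $\overline{F}_U(\delta x)\,\overline{F}_V(\delta x)$ and the defining property of $\mathcal{C}$ handling $\overline{F}_U((1-\delta)x)$ and $\overline{F}_V((1-\delta)x)$ as $\delta\downarrow0$; and the inductive step correctly invokes Lemma~\ref{l2} to keep $F_{\varSigma_{n-1}}$ in $\mathcal{C}$, while replacing $\overline{F}_{\varSigma_{n-1}}(x)$ by $\sum_{i=1}^{n-1}\overline{F}_{X_i}(x)$ in the asymptotics is harmless since the resulting error is $o$ of the full sum. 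The trade-off: the paper's citation is shorter and rests on a strictly more general result (pairwise quasi-asymptotic independence, random weights), whereas your argument is elementary and makes transparent exactly which ingredients of consistent variation enter where — long-tailedness for the lower bound, dominated plus consistent variation for the upper bound — at the cost of leaning on Lemma~\ref{l2}, which the paper itself imports from \cite{kss-2016}, so your proof is self-contained only modulo that auxiliary result.
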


\section{Proofs of the main results}\label{proof}

\begin{proof}[Proof of Theorem~\ref{th1}]
It suffices to prove that
\begin{equation}
\label{eq:tikslas1} \limsup_{y\uparrow1}\limsup_{x\to\infty}
\frac{\overline{F}_{\xi_{(\eta
)} }(xy)}{\overline{F}_{\xi_{(\eta)}}(x)}\leqslant1.
\end{equation}
For all $x>0$ and $K\in\mathbb{N}$, we have
\begin{align*}
\overline{F}_{\xi_{(\eta)}}(x)&=\sum\limits_{n=1}^{\infty}\overline {F}_{\xi_{(n)}}(x)\mathbb{P}(\eta=n)\\
&= \Biggl(\sum\limits_{n=1}^{K}+\sum\limits_{n=K+1}^{\infty}\Biggr)\mathbb {P} \Biggl(\bigcup\limits_{k=1}^{n}\{\xi_k>x\} \Biggr)\mathbb{P}(\eta=n).
\end{align*}
Therefore,
\begin{align}
\label{eq:suma} \notag\frac{\overline{F}_{\xi_{(\eta)} }(xy)}{\overline{F}_{\xi_{(\eta)}}(x)}&=\frac{\sum_{n=1}^{K}\mathbb{P}\big(\bigcup_{k=1}^{n}\{\xi_k>xy\}\big)\mathbb{P}(\eta=n)}{\mathbb{P}(\xi_{(\eta)}>x)}\\
\notag&\quad +\frac{\sum_{n=K+1}^{\infty}\mathbb{P} \big(\bigcup_{k=1}^{n}\{\xi_k>xy\}\big)\mathbb{P}(\eta=n)}{\mathbb{P}(\xi_{(\eta)}>x)}\\
&=:\mathcal{J}_1+\mathcal{J}_2,
\end{align}
for all $x>0$ and $y\in(0,1)$.

Denote
\[
\mathcal{K}:= \bigl\{k\in\mathbb{N}:F_{\xi_k}\notin\mathcal{C}\ \text
{and}\ \overline{F}_{\xi_k}(x)=o \bigl(\overline{F}_{\xi_\varkappa}(x)
\bigr) \bigr\}.
\]

If $x>0$, $1/2\leqslant y<1$, and $K\geqslant\varkappa$, then
\begin{equation}
\label{0-0} \mathcal{J}_1\leqslant\frac{1}{\overline{F}_{\xi_{(\eta)}}(x)}\sum
\limits
_{n=1}^{K}\sum\limits
_{k=1\atop k\notin\mathcal{K}}^{n}
\overline {F}_{\xi_{k}}(xy)\mathbb{P}(\eta=n)+ \sum\limits
_{n=1}^{K}
\sum\limits
_{k=1\atop k\in\mathcal{K}}^{n}\frac
{\overline{F}_{\xi_{k}}(x/2)}{\overline{F}_{\xi_{(\eta)}}(x)}\mathbb {P}(
\eta=n).
\end{equation}

If $k\in\mathcal{K}$, then
\begin{equation}
\label{0-1} \overline{F}_{\xi_k}(x/2)=o \bigl(\,\overline{F}_{\xi_{(\eta)}}(x)
\bigr),
\end{equation}
because
\begin{equation*}
\frac{\overline{F}_{\xi_k}(x/2)}{\overline{F}_{\xi_{(\eta
)}}(x)}\leqslant \frac{\overline{F}_{\xi_k}(x/2)}{\overline{F}_{\xi_{(\varkappa
)}}(x)\mathbb{P}(\eta=\varkappa)} \leqslant\frac{\overline{F}_{\xi_k}(x/2)}{
\overline{F}_{\xi_\varkappa}(x/2)P(\eta=\varkappa)}
\frac{\overline
{F}_{\xi_\varkappa}(x/2)}{\overline{F}_{\xi_\varkappa}(x)},
\end{equation*}
and $F_{\xi_\kappa}\in\mathcal{C}\subset\mathcal{D}$.

The obtained asymptotic relation \eqref{0-1} and
estimate \eqref{0-0} imply
\begin{align}\label{0-2}
\limsup\limits _{x\rightarrow\infty}\mathcal{J}_1&\leqslant\limsup \limits_{x\rightarrow\infty} \frac{1}{\overline{F}_{\xi_{(\eta)}}(x)}\sum\limits _{n=1}^{K}\sum \limits _{k=1\atop k\notin\mathcal{K}}^{n}\overline {F}_{\xi_{k}}(\mathit{xy})\mathbb{P}( \eta=n) \nonumber                                                                                                   \\
&\leqslant \limsup\limits _{x\rightarrow\infty} \Biggl(\max\limits _{1\leqslant k\leqslant K\atop k\notin\mathcal{K}} \biggl\{\frac{\overline {F}_{\xi_{k}}(\mathit{xy})}{\overline{F}_{\xi_{k}}(x)} \biggr\} \frac{1}{\overline {F}_{\xi_{(\eta)}}(x)}\sum\limits _{n=1}^{K}\sum \limits _{k=1\atop k\notin\mathcal{K}}^{n}\overline{F}_{\xi_{k}}(x)\mathbb{P}( \eta=n) \Biggr) \nonumber \\
&\leqslant \max\limits _{1\leqslant k\leqslant K\atop k\notin\mathcal{K}} \biggl\{\limsup\limits _{x\rightarrow\infty } \frac{\overline{F}_{\xi_{k}}(\mathit{xy})}{\overline{F}_{\xi_{k}}(x)} \biggr\} \nonumber                                                                                                                                                  \\
&\quad \times \limsup\limits _{x\rightarrow\infty} \Biggl\{\frac{1}{\overline{F}_{\xi _{(\eta)}}(x)}\sum \limits _{n=1}^{K}\sum\limits _{k=1}^{n} \overline {F}_{\xi_{k}}(x)\mathbb{P}(\eta=n) \Biggr\}.
 \end{align}

For each $1\leqslant n\leqslant K$, we have
\[
\overline{F}_{\xi_{(n)}}(x)=\mathbb{P} \Biggl(\bigcup
\limits_{k=1}^{n}
\{ \xi_k>x\} \Biggr) \geqslant\sum\limits
_{k=1}^{n}
\overline{F}_{\xi_k}(x) \Biggl(1-\sum\limits
_{k=1}^{K}
\overline{F}_{\xi_k}(x) \Biggr),
\]
due to the Bonferroni inequality. This implies that, for an arbitrary
$\varepsilon>0$,
\[
\sum\limits
_{k=1}^{n}\overline{F}_{\xi_k}(x)
\leqslant(1+\varepsilon )\overline{F}_{\xi_{(n)}}(x),
\]
if $x$ is sufficiently large.

Substituting the last estimate into inequality \eqref
{0-2}, we get
\begin{align*}
\limsup\limits_{y\uparrow1}\limsup\limits _{x\rightarrow\infty}\mathcal {J}_1 & \leqslant (1+\varepsilon) \max\limits _{1\leqslant k\leqslant K\atop k\notin\mathcal{K}} \biggl\{\limsup\limits _{y\uparrow1}\limsup\limits _{x\rightarrow\infty } \frac{\overline{F}_{\xi_{k}}(\mathit{xy})}{\overline{F}_{\xi_{k}}(x)} \biggr\} \\
& = 1+\varepsilon,
\end{align*}
because $F_{\xi_k}\in\mathcal{C}$ for each $k\notin\mathcal{K}$.

Since $\varepsilon>0$ is arbitrary, the last estimate
implies that
\begin{align}
\label{eq:pirmasJ} \limsup_{y\uparrow1}\limsup_{x\rightarrow\infty}
\mathcal{J}_1\leqslant1.
\end{align}

Since
\[
\mathbb{P}(\xi_{(\eta)}>x)\geqslant\mathbb{P}(\xi_{(\varkappa
)}>x)
\mathbb{P}(\eta=\varkappa)\geqslant\mathbb{P}(\xi_{\varkappa
}>x)\mathbb{P}(
\eta=\varkappa),
\]
we obtain
\begin{align*}
\mathcal{J}_2\leqslant\frac{\sum_{n=K+1}^{\infty}\sum
_{k=1}^{n}\overline{F}_{\xi_k}(\mathit{xy})\mathbb{P}(\eta=n)}{\overline{F}_{\xi
_\varkappa}(x)
\mathbb{P}(\eta=\varkappa)},
\end{align*}
for all $x>0$ and $y\in(0,1)$.

Condition (c) of the theorem implies that, for some constant $c_2>0$,
\begin{align*}
\sum\limits
_{k=1}^{n}\overline{F}_{\xi_k}(x)
\leqslant c_2\varphi(n) \overline{F}_{\xi_\varkappa}(x),
\end{align*}
for all sufficiently large $x$ and all $n\geqslant1$.

Consequently,
\begin{align}\label{eq:antrasJ}
& \limsup_{y\uparrow1}\limsup_{x\to\infty}\mathcal{J}_2 \nonumber                                                                                                                                                                       \\
&\quad \leqslant \limsup_{y\uparrow1}\limsup_{x\to\infty} \frac{c_2 \overline {F}_{\xi_\varkappa}(\mathit{xy})\sum_{n=K+1}^{\infty}\varphi(n)\mathbb {P}(\eta=n)}{\overline{F}_{\xi_\varkappa}(x)\mathbb{P}(\eta=\varkappa )} \nonumber                  \\
&\quad = \frac{c_2}{\mathbb{P}(\eta=\varkappa)} \biggl(\limsup_{y\uparrow 1}\limsup _{x\to\infty}\frac{\overline{F}_{\xi_\varkappa}(\mathit{xy})}{\overline {F}_{\xi_\varkappa}(x)} \biggr)\sum\limits _{n=K+1}^{\infty} \varphi (n)\mathbb{P}(\eta=n).
\end{align}

Relations \eqref{eq:suma}, \eqref{eq:pirmasJ}, and \eqref{eq:antrasJ}
imply that
\begin{align*}
\limsup_{y\uparrow1}\limsup_{x\to\infty}\frac{\overline{F}_{\xi_{(\eta
)} }(\mathit{xy})}{\overline{F}_{\xi_{(\eta)}}(x)}
\leqslant1+\frac{c_2}{\mathbb
{P}(\eta=\varkappa)}\mathbb{E} \bigl(\varphi(\eta)\ind_{[K+1,\infty)}(
\eta ) \bigr),
\end{align*}
for arbitrary $K\geqslant\varkappa$.

The desired inequality \eqref{eq:tikslas1} now follows from the last
estimate because\break $\mathbb{E} (\varphi(\eta)\ind_{[1,\infty)}(\eta
) )$ is finite due to the conditions of the theorem. Theorem~\ref
{th1} is proved.
\end{proof}

\begin{proof}[Proof of Theorem~\ref{th2}]
Similarly as in the proof
of Theorem~\ref{th1}, it suffices to show that
\begin{align}
\label{eq:tikslas} \limsup_{y\uparrow1}\limsup_{x\to\infty}
\frac{\overline{F}_{S_{(\eta)}
}(\mathit{xy})}{\overline{F}_{S_{(\eta)}}(x)}\leqslant1.
\end{align}

If $K\in\mathbb{N}$ and $x>0$, then
\begin{align}
\notag\mathbb{P}(S_{(\eta)}>x)= \Biggl(\sum\limits
_{n=1}^{K}+
\sum\limits
_{n=K+1}^{\infty} \Biggr)\mathbb{P}(S_{(n)}>x)
\mathbb{P}(\eta=n).
\end{align}

Therefore,
\begin{align}
\label{eq:J1+J2} \notag\frac{\mathbb{P}(S_{(\eta)}>\mathit{xy})}{\mathbb{P}(S_{(\eta)}>x)}&=\frac
{\sum_{n=1}^{K}\mathbb{P}(S_{(n)}>\mathit{xy})\mathbb{P}(\eta=n)}{\mathbb
{P}(S_{(\eta)}>x)}
\\
\notag&\quad +\frac{\sum_{n=K+1}^{\infty}\mathbb{P}(S_{(n)}>\mathit{xy})\mathbb
{P}(\eta=n)}{\mathbb{P}(S_{(\eta)}>x)}
\\
&=:\mathcal{I}_{1}+\mathcal{I}_{2},
\end{align}
for all $x>0$ and $y\in(0,1)$.

The r.v. $\eta$ is not degenerate at zero. Therefore, there exists
$a\in\mathbb{N}$ such that $\mathbb{P}(\eta=a)>0$. If $K\geqslant a$, then
\begin{align}
\notag\mathcal{I}_{1}\leqslant\frac{\sum_{\substack{n=1}}^{K}\mathbb{P}(S_{(n)}>\mathit{xy})\mathbb{P}(\eta=n)}{\sum_{\substack{n=1}}^{K}\mathbb
{P}(S_{(n)}>x)\mathbb{P}(\eta=n)}\leqslant\max\limits
_{\substack
{1\leqslant n\leqslant K\\
n\in{\rm supp}(\eta)}}
\frac{\mathbb
{P}(S_{(n)}>\mathit{xy})}{\mathbb{P}(S_{(n)}>x)},
\end{align}
due to the inequality
\begin{align*}
\frac{a_1+a_2+\cdots+a_m}{b_1+b_2+\cdots+b_m}\leqslant\max \biggl\{\frac
{a_1}{b_1},\frac{a_2}{b_2},
\dots,\frac{a_m}{b_m} \biggr\},
\end{align*}
provided for all $a_i\geqslant0$, $b_i>0$, $i\in\{1,2,\dots,m\}$, and $m\in
\mathbb{N}$.

Since $\mathcal{C}\subset\mathcal{L}$, using Lemma~\ref{l1}, we obtain
\begin{align*}
& \limsup_{y\uparrow1}\limsup_{x\to\infty}\mathcal{I}_1 \\
&\quad \leqslant\limsup_{y\uparrow1}\limsup_{x\to\infty}\max \limits _{\substack{1\leqslant n\leqslant K \\
n\in{\rm supp}(\eta)}}\frac {\overline{F}_{S_{(n)} }(\mathit{xy})}{\overline{F}_{S_{(n)}}(x)} \\
\notag & \quad =\max\limits _{\substack{1\leqslant n\leqslant K \\
n\in{\rm supp}(\eta)}}\limsup_{y\uparrow1}\limsup _{x\to\infty}\frac{\overline {F}_{S_{(n)} }(\mathit{xy})}{\overline{F}_{S_{n}}(\mathit{xy})}\frac{\overline{F}_{S_{n} }(\mathit{xy})}{\overline{F}_{S_{n}}(x)}\frac{\overline{F}_{S_{n} }(x)}{\overline {F}_{S_{(n)} }(x)} & \\
\notag & \quad \leqslant\max\limits _{\substack{1\leqslant n\leqslant K}}\limsup_{y\uparrow1}\limsup _{x\to\infty}\frac{\overline{F}_{S_{n} }(\mathit{xy})}{\overline{F}_{S_{n}}(x)}.
\end{align*}

According to Lemma~\ref{l2}, the d.f. $F_{S_n}$ belongs to the class
$\mathcal{C}$ for each fixed~$n$. Therefore,
\begin{align}
\label{eq:J1} \limsup_{y\uparrow1}\limsup_{x\to\infty}
\mathcal{I}_1\leqslant1.
\end{align}

Let us now consider the term $\mathcal{I}_2$ from expression \eqref
{eq:J1+J2}. If $x>0$, then the numerator of $\mathcal{I}_2$ can be
estimated as follows:
\begin{align*}
\notag&\sum\limits_{n=K+1}^{\infty}\mathbb{P}(S_{(n)}>\mathit{xy})\mathbb{P}(\eta=n)\\
&\quad =\sum\limits_{n=K+1}^{\infty}\mathbb {P}\bigl(\max\{S_1,\dots,S_n\}>\mathit{xy}\bigr)\mathbb{P}(\eta=n)\\
\notag&\quad \leqslant\sum\limits_{n=K+1}^{\infty}\mathbb{P}\bigl(\max\bigl\{ S_1^{(+)},\dots,S_n^{(+)}\bigr\}>\mathit{xy}\bigr)\mathbb{P}(\eta=n)\\
\notag&\quad =\sum\limits_{n=K+1}^{\infty}\mathbb{P}\bigl(S_n^{(+)}>\mathit{xy}\bigr)\mathbb {P}(\eta=n),
\end{align*}
where $S_k^{(+)}:=\xi_1^{+}+\cdots+\xi_k^{+}$ and $\xi^+:=\xi\ind
_{[0,\infty)}(\xi)$.

We can apply Lemma~\ref{l3} for the last sum because of condition (b)
of the theorem and the fact that $F_{\xi_1}\in\mathcal{C}$ $\Rightarrow
$ $F_{\xi_1^+}\in\mathcal{D}$. Using this lemma, we get
\begin{equation}
\label{ai1} \sum\limits
_{n=K+1}^{\infty}\mathbb{P}(S_{(n)}>\mathit{xy})
\mathbb{P}(\eta=n) \leqslant c_3\overline{F}_{\xi_1}(\mathit{xy})\sum
\limits
_{n=K+1}^{\infty
}n^{p+1}\mathbb{P}(\eta=n),
\end{equation}
for some positive constant $c_3$ and for all $x>0$ and $y\in(0,1)$.

On the other hand, for the denominator of $\mathcal{I}_2$, we have
\begin{align*}
\mathbb{P}(S_{(\eta)}>x)&=\sum\limits
_{n=1}^{\infty}\mathbb
{P}(S_{(n)}>x)\mathbb{P}(\eta=n)
\\
&\geqslant\mathbb{P}\bigl(\max\{S_1,\dots,S_a\}>x\bigr)
\mathbb{P}(\eta=a)
\\
&\geqslant\mathbb{P}(S_a>x)\mathbb{P}(\eta=a).
\end{align*}

According to the conditions of the theorem, the d.f. $F_{\xi_k}$
belongs to the class~$\mathcal{C}$ for each fixed index $k\in\mathbb
{N}$. Hence, using Lemma~\ref{l4}, we get
\begin{align*}
\liminf_{x\to\infty}\frac{\mathbb{P}(S_a>x)}{\overline{F}_{\xi_1}(x)} &\geqslant\liminf
_{x\to\infty}\frac{\raisebox{0.7ex}{$\displaystyle
\mathbb{P}(S_a>x)$}}{\sum_{i=1}^a\overline{F}_{\xi_i}(x)}\liminf_{x\to\infty}
\frac{\sum_{i=1}^a\overline{F}_{\xi_i}(x)}{\raisebox
{-0.9ex}{$\displaystyle\overline{F}_{\xi_1}(x)$}}
\\
&=\liminf_{x\to\infty}\frac{\sum_{i=1}^a\overline{F}_{\xi
_i}(x)}{\raisebox{-0.8ex}{$\displaystyle\overline{F}_{\xi_1}(x)$}} \geqslant\liminf
_{x\to\infty}\frac{\overline{F}_{\xi_1}(x)}{\overline
{F}_{\xi_1}(x)} =1.
\end{align*}

The last two estimates imply that
\begin{align}
\label{ai2} \mathbb{P}(S_{(\eta)}>x)\geqslant\frac{1}{2}
\overline{F}_{\xi
_1}(x)\mathbb{P}(\eta=a),
\end{align}
for sufficiently large $x$.

Therefore, by inequalities \eqref{ai1} and \eqref{ai2} we have
\begin{align}\label{eq:J2}
\notag&\limsup_{y\uparrow1}\limsup
_{x\to\infty}\mathcal {I}_2
\\
&\quad \leqslant \frac{2c_3}{\mathbb{P}(\eta=a)} \biggl(\limsup_{y\uparrow
1}\limsup
_{x\to\infty}\frac{\overline{F}_{\xi_1}(\mathit{xy})}{\overline{F}_{\xi
_1}(x)} \biggr)\sum\limits
_{n=K+1}^{\infty}n^{p+1}
\mathbb{P}(\eta=n).
\end{align}

Finally, substituting estimates \eqref{eq:J1} and \eqref{eq:J2} into
\eqref{eq:J1+J2}, we obtain that
\begin{align*}
\limsup_{y\uparrow1}\limsup_{x\to\infty}\frac{\mathbb{P}(S_{(\eta
)}>\mathit{xy})}{\mathbb{P}(S_{(\eta)}>x)}
\leqslant1+\frac{2c_3}{\mathbb{P}(\eta
=a)}\mathbb{E} \bigl(\eta^{p+1}
\ind_{[K+1,\infty)}(\eta) \bigr),
\end{align*}
for an arbitrary $K\geqslant a$.

This inequality implies the desired relation \eqref{eq:tikslas} due to
condition (c) of the theorem. Theorem~\ref{th2} is proved.
\end{proof}

\section*{Acknowledgments} We would like to thank the anonymous
referees for the detailed and helpful comments on the first version of
the manuscript and especially for the proposed new way of proving
Theorem 4.



\end{document}